\numberwithin{equation}{section}
\numberwithin{figure}{section}
\newcommand{\norm}[2][]{||#2||_{#1}}
\newcommand*{\defeq}{\mathrel{\vcenter{\baselineskip0.5ex \lineskiplimit0pt
                     \hbox{\scriptsize.}\hbox{\scriptsize.}}}%
                     =}
\DeclareMathOperator*{\argmax}{arg\,max}
\newtheorem{algorithm}{Algorithm}[section]
\newtheorem{remark}{Remark}[section]
\title{A new embedded variable stepsize, variable order family of low computational complexity}%
\author{Victor DeCaria \and Ahmet Guzel \and William Layton \and Yi Li}
\begin{document}

\maketitle
\begin{abstract}
Variable Stepsize Variable Order (VSVO) methods are the methods of choice to efficiently solve a wide range of ODEs with minimal work and assured accuracy. However, VSVO methods have limited impact in timestepping methods in complex applications due to their computational complexity and the difficulty to implement them in legacy code. We introduce a family of implicit, embedded, VSVO methods that require only one BDF solve at each time step followed by adding linear combinations of the solution at previous time levels. In particular, we construct implicit and linearly implicit VSVO methods of orders two, three and four with the same computational complexity as variable stepsize BDF3. The choice of changing the order of the method is simple and does not require additional solves of linear or nonlinear systems.
\end{abstract}
\section{Introduction}
\emph{This is an expanded and informal version of a manuscript with a similar title submitted for publication.} Variable step, variable order (VSVO) methods, such as Gear's method, have become the methods of choice for numerical simulation of systems of ordinary differential equations, yet have little penetration in applications such as fluid dynamics despite the growing interest in variable stepsize integrators, e.g., \cite{JOHN2010514},\cite{kay2010}, \cite{besier2012},\cite{FAILER2018448}. This may be due to storage limitations and the complexity of solving a nonlinear system to compute each of the order's approximations at each timestep. It may also be due to the cognitive complexity of implementing a VSVO method in an already intricate CFD code.

We present, analyze, and test herein a new embedded family of VSVO methods for solving initial value problems addressing the difficulties in complex applications by post-processing solution data with time filters. Time filters are an established tool to non-intrusively modify weather models to suppress spurious, nonphysical modes to improve predictions \cite{asselin72} with recent improvements \cite{Williams2009},\cite{williams2011},\cite{williams2013}, \cite{litrenchea},\cite{aluthge2016}, \cite{hill2017} and have been used to improve the physical fidelity of artificial compression methods \cite{DECARIA2017733}. They are simple to implement and inexpensive to compute. 

This research is motivated by the algorithm of \cite{guzel} which is (for constant stepsize)
\begin{flalign*}
&\text{Backward Euler}  &&\frac{y^{1}_{n+1}-y_{n}}{k} = f(t_{n+1},y_{n+1}^{1})\\
&\text{Time Filter}  &&y^{2}_{n+1} = y^{1}_{n+1} - \frac{1}{3}(y^{1}_{n+1}-2 y_n +y_{n-1})
\end{flalign*}
The approximation $y^{2}_{n+1}$ is second order, $A$-stable and is implemented by adding one line to an existing Backward Euler code. The first contribution herein is to answer the natural question: Can this be extended by more filters, using more $y$ values, to produce an embedded family of methods and from that a VSVO algorithm of negligible additional complexity over Backward Euler?

We prove in Theorem \ref{thm:nonexistence} an order barrier: Backward Euler can only be made up to second order with linear time filters. Thus, to develop an embedded family of higher accuracy, we apply the time filter idea beginning with a third order method, BDF3. BDF methods are popular in CFD, e.g. BDF2 \cite{fiordilino2017}\cite{akbas} \cite{wang2011}, BDF3 \cite{CHARNYI2017289}\cite{guermond2015}, BDF4 \cite{klein2015}, convex combinations of BDF methods \cite{vatsa2010} \cite{ravindran2015} \cite{jiang2017}, a predictor corrector scheme using BDF \cite{NIGRO2014136}, VSVOBDF \cite{HAY2015151}, and many others. We develop time filters for every variable stepsize, variable coefficient BDF$p$ method that increases their order of accuracy by one. We call this method Filtered BDF$p$+1 (FBDF$p$+1), and it is given in Algorithm \ref{alg:bdf-filter}. We then use the filtering idea to generate a VSVO algorithm of orders 2,3,4 with complexity comparable to BDF3. Starting with the approximation from BDF3, we apply a time filter to obtain FBDF4. We then develop a second filter, BDF3-Stab \eqref{eqn:BDF3-2-variable} to the BDF3 approximation to yield a second order $A$-stable  (and $G$-stable) approximation (see Theorem \ref{thm:G-stab}). 
 Let the super script denote the order of the approximation, and the subscript denote the timestep. The resulting method, Multiple Order One Solve Embedded 234 (MOOSE234) for constant stepsize is

\begin{flalign*}
&\text{BDF3}  &&\frac{11y^{3}_{n+1}-18y_{n} +9y_{n-1} - 2y_{n-2}}{6k} = f(t_{n+1},y_{n+1}^{3})\\
&\text{BDF3-Stab}  &&y^{2}_{n+1} = y^{3}_{n+1} + \frac{9}{125}(y^{3}_{n+1}-3 y_n + 3 y_{n-1} - y_{n-2}) \\ 
&\text{FBDF4}  &&y^{4}_{n+1} = y^{3}_{n+1} - \frac{3}{25}(y^{3}_{n+1}-4 y_n + 6 y_{n-1} - 4y_{n-2} + y_{n-3})\\ \\
\hline
& \text{Error} && Est_2=y^{2}_{n+1}-y^{3}_{n+1}\\
& \text{Estimation}&& Est_3=y^{4}_{n+1}-y^{3}_{n+1}
\end{flalign*}
\begin{gather*}
Est_4 = y^{4}_{n+1} - \frac{48}{25}y_n + \frac{36}{25}y_{n-1}-\frac{16}{25}y_{n-2} + \frac{3}{25}y_{n-3} - \frac{12}{25}kf(t_{n+1},y^{4}_{n+1})
\end{gather*}

The algorithm for variable timestep is given in Section \ref{sec:algorithm}. This is an embedded family; its complexity is dominated by the nonlinear BDF3 solve. The remaining steps contribute negligible cost, require non-intrusive modifications to an existing BDF3 code, and, when used for a complex application, are single instruction, multiple data (SIMD) type, which adapt well to parallel architectures. Each step computes a solution of different temporal orders of accuracy, so $Est_2$ gives an error estimator for $y^2_{n+1}$, and $Est_3$ gives an error estimator for $y^3_{n+1}$. $Est_4$ is an approximation of the leading term of the local truncation error of $y^4_{n+4}$. Thus, the first two error estimates are embedded, and contribute negligible computational expense. Using standard strategies for timestep and order selection, the family yields a VSVO algorithm, presented in Section \ref{sec:algorithm}. The resulting algorithm, MOOSE234, performed well on the Van der Pol test problem in Section \ref{sec:van}. A linearly implicit version of MOOSE234 performed well on the incompressible Navier-Stokes equations, Section \ref{sec:numerical_formulation}.
\section{Preliminaries\label{sec:preliminaries}}
The methods discussed herein will be shown to correspond to one-leg methods (OLMs), which require one function evaluation per timestep. OLMs are distinct from linear multistep methods (LMMs), have better nonlinear stability properties than their LMM counterparts and are well suited to time adaptation \cite{dahlquist79}, \cite{Nevanlinna1978}, \cite{janssen}, \cite{Kulikov2006}. Section \ref{subsec:notation} introduces the basic notation needed to state the methods in their full variable stepsize generality.  Section \ref{subsec:classical_error} recalls a result of Dahlquist for OLMs needed herein.
\subsection{Notation\label{subsec:notation}}
Let $k_n = t_{n+1}-t_n$, and $\bar{k}_n$ be homogeneous of first degree in $k_n,k_{n+1},...,k_{n+m-1}$. An $m$ step OLM is \cite{dahlquist79}
\begin{equation}\label{eqn:olm}
\sum_{i=0}^{m} \alpha_i y_{n+i} = \bar{k}_n f\left(\sum_{i=0}^m \beta_i t_{n+i},\sum_{i=0}^m \beta_i y_{n+i}\right).
\end{equation}
The corresponding LMM twin is 
\begin{equation}\label{eqn:lmm}
\sum_{i=0}^{m} \alpha_i y_{n+i} = \bar{k}_n \sum_{i=0}^m\beta_i f\left(t_{n+i},y_{n+i}\right).
\end{equation}
The first and second characteristic polynomials associated with both OLMs and LMMs are
\begin{equation}\notag
\rho(r) = \sum_{j=0}^{m} \alpha_j r^j \hspace{10mm} \sigma(r) = \sum_{j=0}^{m} \beta_j r^j.
\end{equation}
For variable timesteps, $\alpha_i$ and $\beta_i$ depend on the stepsizes $k_n$, but we suppress additional subscripts for the sake of notation. In the construction of the methods developed herein, it is convenient to absorb $\bar{k}_n$ into $\alpha_i$ through the change of variables $\bar{\alpha}_i = \alpha_i/\bar{k}_n$. Then \eqref{eqn:olm} becomes
\begin{equation}\label{eqn:olm_bar}
\sum_{i=0}^{m} \bar{ \alpha}_i y_{n+i} =  f\left(\sum_{i=0}^m \beta_i t_{n+i},\sum_{i=0}^m \beta_i y_{n+i}\right).
\end{equation}

The variable stepsize, variable coefficient BDF methods of order $p$ (BDF$p$) using Newton interpolation are given as follows. 
Using the notation of \cite[pg. 155]{HAY2015151}, let $\delta^j$ be the $j$th order backward divided difference defined by
\begin{equation}\notag
\delta^j\phi = \phi[t_{n+m},t_{n+m-1},\cdots,t_{n+m-j}].
\end{equation}
Let $m\geq p$. Then BDF$p$ can be written
\begin{equation}\notag
\sum_{j=m-p}^m \bar{\alpha}^{(p)}_j y_{n+j} \defeq \sum_{j=1}^p \left[\prod_{i=1}^{j-1}(t_{n+m} - t_{n+m-i}) \right]\delta^j y = f(t_{n+m},y_{n+m})
\end{equation}
where the $\bar{\alpha}^{(p)}_js$ are given implicitly by the equation. The coefficients for variable stepsize BDF up to order four have also been written explicitly in terms of stepsize ratios in \cite{Wang2008}. The divided differences can be expanded as 
\begin{equation}\label{eqn:divided-diff-expansion}
\delta^j \phi = \sum_{i=m-j}^m c^{(j)}_i \phi_{n+i}
\end{equation}
with a procedure to generate the $c^{(j)}_i$s given in the appendix.

If the approximation at $t_{n+m}$ is an intermediate approximation, we will denote it by $\tilde{y}_{n+m}$ to indicate a generic intermediate approximation, or $y^{p}_{n+m}$ to indicate an approximation of order $p$. The intermediate divided differences are defined
\begin{equation}\notag
\delta^j \tilde{y} = c^{(j)}_m\tilde{y}_{n+m} +  \sum_{i=m-j}^{m-1} c^{(j)}_i y_{n+i},
\end{equation}
\begin{equation}\notag
\delta^j y^{p} = c^{(j)}_my^{p}_{n+m} +  \sum_{i=m-j}^{m-1} c^{(j)}_i y_{n+i},
\end{equation}
\textbf{Example.} Recalling the variable stepsize Backward Euler plus time filter in \cite[pg. 307]{guzel}, we will rewrite it in terms of divided differences. With $\tau=\frac{k_{n+1}}{k_n}$ it is as follows, 
\begin{flalign*}
&\text{Backward Euler}  &&\frac{y^{1}_{n+2}-y_{n+1}}{k_{n+1}} = f(t_{n+2},y_{n+2}^{1})\\
&\text{Time Filter}  &&y_{n+2} = y^{1}_{n+2} - \frac{\tau(1+\tau)}{1+2\tau}\left(\frac{1}{1+\tau}y^{1}_{n+2}-y_{n+1}+ \frac{\tau}{1+\tau} y_{n}\right)
\end{flalign*}
Through algebraic manipulation, this can be written with divided differences,
\begin{flalign}
&\text{Backward Euler}  &&\delta^1y^{1} = f(t_{n+2},y_{n+2}^{1})\notag\\
&\text{Time Filter}  &&y_{n+2} = y^{1}_{n+2} - \left(\frac{k_{n+1}}{\frac{1}{k_{n+1}}+\frac{1}{k_{n+1}+k_n}}\right)\delta^2 y^{1} \label{eqn:example_div_diff} \\
& && = y^{1}_{n+2} - \left(\frac{k_{n+1}}{\frac{1}{k_{n+1}}+\frac{1}{k_{n+1}+k_n}}\right)\left(\frac{\frac{y^{1}_{n+2}-y_{n+1}}{k_{n+1}} - \frac{y_{n+1}-y_{n}}{k_{n}}}{k_{n+1} + k_n}\right) \notag
\end{flalign}
The divided differences are expanded explicitly as
\begin{equation}\notag
\delta^1y^{1} = c^{(1)}_2y_{n+2}^{1} + c^{(1)}_1y_{n+1} +  c^{(1)}_0y_{n} = \frac{1}{k_{n+1}}y_{n+2}^{1} + \frac{-1}{k_{n+1}}y_{n+1} +  0y_{n}
\end{equation}
\begin{gather*}\notag
\delta^2y^{1} = c^{(2)}_2y_{n+2}^{1} + c^{(2)}_1y_{n+1} +  c^{(2)}_0y_{n} = \\=
\frac{1}{k_{n+1}(k_{n+1} + k_n)}y_{n+2}^{1} + \left(\frac{-1}{k_{n+1}} + \frac{-1}{k_n}\right)\frac{1}{k_{n+1}+k_n}y_{n+1} +  \frac{1}{k_{n}(k_{n+1} + k_n)}y_{n}.
\end{gather*}
The coefficients $c^{(j)}_i$ are apparent. The rearrangement \eqref{eqn:example_div_diff} is a special case of Algorithm \ref{alg:bdf-filter} with $p=1$. 

The coefficients for higher order differences are lengthy to write out. While they may be hard-coded into a program, they are easily computed with recursion \cite[pg. 175]{HAY2015151} or nested loops (see Appendix \ref{sec:psuedocode}). 
\subsection{Classical Error Results\label{subsec:classical_error}}
Define the local truncation error
\begin{align*}
\varepsilon_n = \bar{k}_n^{-1}\sum_{i=0}^{m} \alpha_i y(t_{n+i}) - f\left(\sum_{i=0}^m \beta_iy(t_{n+i})\right)\\
=\sum_{i=0}^{m} \bar{\alpha}_i y(t_{n+i}) - f\left(\sum_{i=0}^m \beta_iy(t_{n+i})\right)
\end{align*} 
A characterization of the accuracy of the OLM can be stated in terms of the differentiation and interpolation error operators \cite{dahlquist81} \cite{dahlquist83}.
\begin{definition}[Differentiation and Interpolation Error \label{def:diff-int-error}]

The differentiation error $L_d$ and interpolation error $L_i$ operators are defined
\begin{align}
&(L_d\phi)\left(\sum_{i=0}^m \beta_i t_{n+i}\right) = \sum_{i=0}^m \bar{\alpha}_i \phi(t_{n+i}) - \phi' \left(\sum_{i=0}^m \beta_it_{n+i}\right)\label{eqn:diff-error}\\
&(L_i\phi)\left(\sum_{i=0}^m \beta_i t_{n+i}\right) = \sum_{i=0}^m \beta_i \phi(t_{n+i}) - \phi \left(\sum_{i=0}^m \beta_it_{n+i}\right).\label{eqn:int-error}
\end{align}
\end{definition}
A Taylor series calculation gives
\begin{equation}\label{eqn:lte-est}
\text{Leading term of }\varepsilon_n = \left( L_dy - f_y\left(\sum_{i=0}^m \beta_i t_{n+i},y\left(\sum_{i=0}^m \beta_i t_{n+i}\right)\right)\cdot L_iy\right)\left(\sum_{i=0}^m \beta_i t_{n+i}\right)
\end{equation}

\begin{definition}[Proposed in {\cite[pg. 8]{dahlquist81}}]\label{def:annihilating}

Let $p_d$ be the largest integer such \ref{eqn:diff-error} is zero for all polynomials of degree $p_d$, and $p_i$ the largest integer such that \ref{eqn:int-error} is zero for all polynomials of degree $p_i$. The order of the one leg method is $\min (p_d,p_i+1)$.
\end{definition}
\section{Embedding BDF$p$ in a new family\label{sec:derivation}}

We now develop a time filter increases the order of consistency of BDF$p$ by one. Let $m = p+1$, and consider the following method.
\begin{algorithm}[Filtered BDF$p$+1 (FBDF$p$+1)]\label{alg:bdf-filter}
Given $$\{y_n,y_{n+1},...,y_{n+m-1}\},$$ find $y_{n+m}$ satisfying
\begin{equation}\label{eqn:bdf-temp}
\sum_{j=1}^p \left[\prod_{i=1}^{j-1}(t_{n+m} - t_{n+m-i}) \right]\delta^j y^p = f(t_{n+m},y^p_{n+m}).
\end{equation}
\begin{equation}\label{eqn:eta}
\eta^{(p+1)} = \frac{\prod_{i=1}^p(t_{n+m}-t_{n+m-i})}{\sum_{j=1}^{p+1}(t_{n+m}-t_{n+m-j})^{-1}}.
\end{equation}\begin{equation}\label{eqn:bdf-filter}
y_{n+m} = y^p_{n+m} - \eta^{(p+1)} \delta^{p+1}y^p.
\end{equation}
\end{algorithm} 
It will be shown in Theorem \ref{thm:fbdf-order} that this method has consistency error of order $p+1$. The proof requires reducing the above steps to an OLM. Specifically, the OLM approximates $y'$ the same as BDF$p+1$.
\begin{algorithm}\label{alg:bdf-filter-equivalent-olm}
Given $$\{y_n,y_{n+1},...,y_{n+m-1}\},$$ and $\eta^{(p+1)}$ as in \eqref{eqn:eta}, find $y_{n+m}$ satisfying
\begin{equation}\label{eqn:bdf-filter-equivalent-olm}
\sum_{j=1}^{p+1} \left[\prod_{i=1}^{j-1}(t_{n+m} - t_{n+m-i}) \right]\delta^j y = f\left(t_{n+m},y_{n+m} + \frac{\eta^{(p+1)}}{1-\eta^{(p+1)} c^{(p+1)}_m} \delta^{p+1}y\right).
\end{equation}
\end{algorithm} 
The $\beta_i$s for this method are given implicitly by $$\sum_{j=0}^m \beta_i y_{n+j} = y_{n+m} + \frac{\eta^{(p+1)}}{1-\eta^{(p+1)} c^{(p+1)}_m} \delta^{p+1}y.$$ Since $\delta^pt=0$ for $p\geq 2$, we see that it is indeed an OLM of the form \eqref{eqn:olm_bar}.

\begin{proposition}
Algorithms \ref{alg:bdf-filter} and \ref{alg:bdf-filter-equivalent-olm} are equivalent.
\end{proposition}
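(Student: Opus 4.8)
The plan is to eliminate the intermediate value $y^p_{n+m}$ from the two equations \eqref{eqn:bdf-temp}--\eqref{eqn:bdf-filter} of Algorithm \ref{alg:bdf-filter} and recover the single equation \eqref{eqn:bdf-filter-equivalent-olm} of Algorithm \ref{alg:bdf-filter-equivalent-olm}, thereby showing that $y_{n+m}$ solves one system if and only if it solves the other. Throughout I keep the two families of divided differences distinct: $\delta^j y$ is built on the final value $y_{n+m}$, while $\delta^j y^p$ is built on the intermediate value $y^p_{n+m}$, and the previous values $y_n,\dots,y_{n+m-1}$ are common to both.

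First I would record the only relation tying $y^p_{n+m}$ to $y_{n+m}$. Since $\delta^{p+1}y^p$ and $\delta^{p+1}y$ differ only in the term carrying the top value, $\delta^{p+1}y^p-\delta^{p+1}y = c^{(p+1)}_m\,(y^p_{n+m}-y_{n+m})$. Substituting this into the filter step \eqref{eqn:bdf-filter} and solving for $y^p_{n+m}-y_{n+m}$ gives, as long as $1-\eta^{(p+1)}c^{(p+1)}_m\neq 0$,
$$y^p_{n+m} \;=\; y_{n+m} + \frac{\eta^{(p+1)}}{1-\eta^{(p+1)}c^{(p+1)}_m}\,\delta^{p+1}y ,$$
and this manipulation is reversible, so \eqref{eqn:bdf-filter} is equivalent to this explicit expression for $y^p_{n+m}$. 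This is exactly the quantity inside $f$ in \eqref{eqn:bdf-filter-equivalent-olm}, so after eliminating $y^p_{n+m}$ the right-hand sides of \eqref{eqn:bdf-temp} and \eqref{eqn:bdf-filter-equivalent-olm} already agree, and it remains only to match the left-hand sides.

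For the left-hand sides, write $B_q[z]$ for the BDF$q$ difference operator evaluated with top value $z$. Splitting off the highest term of BDF$(p+1)$ gives $B_{p+1}[y_{n+m}] = B_p[y_{n+m}] + \big(\prod_{i=1}^p(t_{n+m}-t_{n+m-i})\big)\delta^{p+1}y$, while affine dependence on the top value gives $B_p[y^p_{n+m}] = B_p[y_{n+m}] + \bar{\alpha}^{(p)}_m\,(y^p_{n+m}-y_{n+m})$. Subtracting and inserting the formula above for $y^p_{n+m}-y_{n+m}$, the two left-hand sides coincide precisely when
$$\bar{\alpha}^{(p)}_m\,\frac{\eta^{(p+1)}}{1-\eta^{(p+1)}c^{(p+1)}_m} \;=\; \prod_{i=1}^p(t_{n+m}-t_{n+m-i}).$$
I would verify this coefficient identity from the explicit leading coefficients $c^{(j)}_m=\big(\prod_{i=1}^{j}(t_{n+m}-t_{n+m-i})\big)^{-1}$ produced by the divided-difference recursion of the appendix: these telescope to $\bar{\alpha}^{(q)}_m=\sum_{j=1}^{q}(t_{n+m}-t_{n+m-j})^{-1}$ for every $q$, so the denominator in \eqref{eqn:eta} is exactly $\bar{\alpha}^{(p+1)}_m$, i.e. $\eta^{(p+1)}=\prod_{i=1}^p(t_{n+m}-t_{n+m-i})/\bar{\alpha}^{(p+1)}_m$, and $\bar{\alpha}^{(p+1)}_m-\bar{\alpha}^{(p)}_m = c^{(p+1)}_m\prod_{i=1}^p(t_{n+m}-t_{n+m-i})$. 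Together these reduce $1-\eta^{(p+1)}c^{(p+1)}_m$ to $\bar{\alpha}^{(p)}_m/\bar{\alpha}^{(p+1)}_m$, which is nonzero because every stepsize is positive; the displayed identity then follows in one line, and this also disposes of the well-posedness caveat $1-\eta^{(p+1)}c^{(p+1)}_m\neq 0$ used above.

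The only substantive step is thus the BDF leading-coefficient identity $\bar{\alpha}^{(q)}_m=\sum_{j=1}^{q}(t_{n+m}-t_{n+m-j})^{-1}$ — equivalently, the observation that $\eta^{(p+1)}$ in \eqref{eqn:eta} is $\prod_{i=1}^p(t_{n+m}-t_{n+m-i})$ divided by the top BDF$(p+1)$ coefficient — together with keeping the $\delta^j y$ and $\delta^j y^p$ bookkeeping straight. Everything else is a reversible substitution, which gives the equivalence in both directions.
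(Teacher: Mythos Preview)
Your proof is correct and follows essentially the same route as the paper: eliminate $y^p_{n+m}$ from the filter step to obtain $y^p_{n+m}=y_{n+m}+\frac{\eta^{(p+1)}}{1-\eta^{(p+1)}c^{(p+1)}_m}\delta^{p+1}y$, substitute into the BDF$p$ step, and verify the resulting coefficient identity using $c^{(j)}_m=\big(\prod_{i=1}^j(t_{n+m}-t_{n+m-i})\big)^{-1}$ and the telescoping formula $\bar{\alpha}^{(q)}_m=\sum_{j=1}^q(t_{n+m}-t_{n+m-j})^{-1}$. Your organization via the $B_q[z]$ notation and your explicit check that $1-\eta^{(p+1)}c^{(p+1)}_m=\bar{\alpha}^{(p)}_m/\bar{\alpha}^{(p+1)}_m\neq 0$ (hence reversibility) are tidy additions the paper leaves implicit, but the computations are the same.
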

\begin{proof}
Subtract $\eta^{(p+1)} c^{p+1}_m y_{n+m}$ from both sides of \eqref{eqn:bdf-filter}.
\begin{equation}\notag
(1-\eta^{(p+1)} c^{(p+1)}_m)y_{n+m} = (1-\eta^{(p+1)} c^{(p+1)}_m)y^p_{n+m} - \eta^{(p+1)} \delta^{p+1}y.
\end{equation}
Solving for $y^p_{n+m}$ gives
\begin{equation}\label{eqn:solved_for_temp_value}
y^p_{n+m} = y_{n+m} + \frac{\eta^{(p+1)}}{1-\eta^{(p+1)} c^{(p+1)}_m} \delta^{p+1}y.
\end{equation}
Substituting this into the BDF$p$ step \eqref{eqn:bdf-temp}, the right hand side of \eqref{eqn:bdf-temp} becomes the right hand side of \eqref{eqn:bdf-filter-equivalent-olm} as desired. The left hand side of \eqref{eqn:bdf-temp} becomes
\begin{gather}\label{eqn:substituted_lhs}
\sum_{j=1}^p \left[\prod_{i=1}^{j-1}(t_{n+m} - t_{n+m-i}) \right]\delta^j y \\+ \left\lbrace \frac{\eta^{(p+1)}}{1-c_m^{(p+1)}\eta^{(p+1)}}\sum_{j=1}^p \left[\prod_{i=1}^{j-1}(t_{n+m} - t_{n+m-i}) \right]c_m^{(j)}\right\rbrace\delta^{p+1}y. \notag
\end{gather}
We next simplify the scalar, shown in braces, multiplying $\delta ^{p+1}y$ in \eqref{eqn:substituted_lhs}. First, note that a simple calculation (not shown) gives
\begin{equation}\notag
c^{(j)}_m = \left(\prod_{i=1}^{j}(t_{n+m}-t_{n+m-i})\right)^{-1}.
\end{equation}
Splitting the term in braces apart, we see that
\begin{gather*}
\frac{\eta^{(p+1)}}{1-c_m^{(p+1)}\eta^{(p+1)}}=\frac{ \frac{\prod_{i=1}^p(t_{n+m}-t_{n+m-i})}{\sum_{j=1}^{p+1}(t_{n+m}-t_{n+m-j})^{-1}}}{1- \left(\prod_{i=1}^{p+1}(t_{n+m}-t_{n+m-i})\right)^{-1} \frac{\prod_{i=1}^p(t_{n+m}-t_{n+m-i})}{\sum_{j=1}^{p+1}(t_{n+m}-t_{n+m-j})^{-1}}}=\\
=\frac{\prod_{i=1}^p(t_{n+m}-t_{n+m-i})}{\sum_{j=1}^{p+1}(t_{n+m}-t_{n+m-j})^{-1}- (t_{n+m}-t_n)^{-1}}=\frac{\prod_{i=1}^p(t_{n+m}-t_{n+m-i})}{\sum_{j=1}^{p}(t_{n+m}-t_{n+m-j})^{-1}}  
\end{gather*}
and 
\begin{gather*}
\sum_{j=1}^p \left[\prod_{i=1}^{j-1}(t_{n+m} - t_{n+m-i}) \right]c_m^{(j)}\\= \sum_{j=1}^p \left[\prod_{i=1}^{j-1}(t_{n+m} - t_{n+m-i}) \right]\left(\prod_{i=1}^{j}(t_{n+m}-t_{n+m-i})\right)^{-1} = \sum_{j=1}^{p}(t_{n+m}-t_{n+m-j})^{-1}.
\end{gather*}
Thus, the term in braces simplifies to
\begin{equation}
\sum_{j=1}^p \left[\prod_{i=1}^{j-1}(t_{n+m} - t_{n+m-i}) \right]c_m^{(j)}\frac{\eta^{(p+1)}}{1-c_m^{(p+1)}\eta^{(p+1)}} = \prod_{i=1}^{p}(t_{n+m} - t_{n+m-i}).
\end{equation}
Absorbing this into the sum in \eqref{eqn:substituted_lhs} gives the desired left hand side of \eqref{eqn:bdf-filter-equivalent-olm}.
\end{proof}

\subsection{Stability and Error Analysis of FBDF$p$+1\label{sec:error_analysis}}

By construction, the left hand side of FBDF$p$+1 is that of BDF$p$+1, and the argument of the function evaluation in FBDF$p$+1 is a consistent approximation $y(t^{n+m})$. This observation makes it easy to deduce the 0-stability and consistency error of the methods. It is immediate that FBDF$p$+1 is 0-stable whenever BDF$p$+1 is 0-stable since they have the same first characteristic polynomial. Therefore, FBDF6, which is BDF5 plus a time filter, is the highest order 0-stable method for constant stepsize. The 0-stability of variable step methods is highly nontrivial, and conditions on the stepsize ratios to guarantee 0-stability for BDF methods are given \cite{Calvo1990}, and \cite{Guglielmi2001} which improved the upper bound on the stepsize ratios for BDF3. 
The consistency error analysis, presented next, is an application of Definition \ref{def:annihilating}. 
\begin{theorem}\label{thm:fbdf-order} FBDF$p$+1 is consistent of order $p+1$.
\end{theorem}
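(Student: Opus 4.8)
The plan is to verify Theorem~\ref{thm:fbdf-order} through Definition~\ref{def:annihilating} applied to the equivalent one-leg method of Algorithm~\ref{alg:bdf-filter-equivalent-olm}; by the preceding proposition this produces the same $y_{n+m}$ as Algorithm~\ref{alg:bdf-filter}, so it suffices to bound the order of that OLM. Concretely, I would show that the differentiation error operator $L_d$ of \eqref{eqn:diff-error} annihilates every polynomial of degree $\le p+1$ and that the interpolation error operator $L_i$ of \eqref{eqn:int-error} annihilates every polynomial of degree $\le p$. Definition~\ref{def:annihilating} then gives order $\min(p_d,p_i+1)\ge\min(p+1,p+1)=p+1$, and \eqref{eqn:lte-est} converts this into the stated consistency estimate for $\varepsilon_n$.

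For $p_d$: the left-hand side of \eqref{eqn:bdf-filter-equivalent-olm}, namely $\sum_{j=1}^{p+1}\big[\prod_{i=1}^{j-1}(t_{n+m}-t_{n+m-i})\big]\delta^j y$, is exactly the variable-stepsize BDF$p{+}1$ formula for $y'(t_{n+m})$: it is the derivative at $t_{n+m}$ of the Newton interpolant of $y$ through the nodes $t_{n+m},t_{n+m-1},\dots,t_{n+m-p-1}$ (the $j$-th term being $\delta^j y$ times the derivative at $t_{n+m}$ of $\prod_{i=0}^{j-1}(t-t_{n+m-i})$). Hence it returns $\phi'(t_{n+m})$ exactly whenever $\phi$ is a polynomial of degree $\le p+1$, since then the interpolant coincides with $\phi$. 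I also need the differentiation abscissa in \eqref{eqn:diff-error} to equal $t_{n+m}$: applying the implicit relation defining the $\beta_i$ to $\phi(t)=t$ and using $\delta^{p+1}t=0$ (valid because $p+1\ge 2$) gives $\sum_{i=0}^m\beta_i t_{n+i}=t_{n+m}$. Therefore $(L_d\phi)(t_{n+m})=0$ for all $\phi$ of degree $\le p+1$, i.e. $p_d\ge p+1$ (in fact equality holds, since BDF$p{+}1$ has order exactly $p+1$).

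For $p_i$: substituting the implicit relation $\sum_{i=0}^m\beta_i y_{n+i}=y_{n+m}+\frac{\eta^{(p+1)}}{1-\eta^{(p+1)}c^{(p+1)}_m}\delta^{p+1}y$ into \eqref{eqn:int-error}, evaluated at the abscissa $t_{n+m}$, collapses to $(L_i\phi)(t_{n+m})=\frac{\eta^{(p+1)}}{1-\eta^{(p+1)}c^{(p+1)}_m}\,\delta^{p+1}\phi$, the denominator being nonzero by the same nondegeneracy already invoked in the preceding proposition. For any polynomial $\phi$ of degree $\le p$ the $(p{+}1)$-st divided difference $\delta^{p+1}\phi$ vanishes, so $(L_i\phi)(t_{n+m})=0$ and $p_i\ge p$. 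Combining the two bounds in Definition~\ref{def:annihilating} finishes the proof.

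I do not expect a single hard estimate here; the main obstacle is careful bookkeeping. The delicate points are: reading off the $\beta_i$ correctly from their implicit definition and confirming that the function-evaluation abscissa is $t_{n+m}$ rather than some drifted point; and justifying that the variable-node BDF$p{+}1$ differentiation operator is exact on polynomials of degree $\le p+1$, which for nonuniform nodes is cleanest via the divided-difference remainder formula. Once those are in place, everything else reduces to the elementary fact that divided differences of order exceeding the polynomial degree are identically zero.
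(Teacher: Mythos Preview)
Your proposal is correct and follows essentially the same route as the paper: both identify the $L_d$ operator of the equivalent OLM with the BDF$p{+}1$ differentiation formula (hence $p_d=p+1$), compute $L_i\phi=\frac{\eta^{(p+1)}}{1-\eta^{(p+1)}c^{(p+1)}_m}\delta^{p+1}\phi$ (hence $p_i=p$), and conclude via Definition~\ref{def:annihilating}. Your write-up is in fact slightly more careful than the paper's in explicitly verifying that the abscissa $\sum_i\beta_i t_{n+i}$ equals $t_{n+m}$ before invoking exactness of the BDF$p{+}1$ formula there.
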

\begin{proof}
The differentiation error $L_d$ (see Definition \ref{def:diff-int-error}) of FBDF$p$+1 is the same as the local truncation error of BDF$p$+1 which annihilates polynomials up to order $p+1$, so $p_d = p+1$. The interpolation error $L_i$ is
\begin{multline}\label{eqn:interp-error-fbdf}
(L_i\phi)\left(\sum_{i=0}^m \beta_i t_{n+i}\right) = \sum_{i=0}^m \beta_i \phi(t_{n+i}) - \phi \left(\sum_{i=0}^m \beta_it_{n+i}\right)\\
=\phi(t_{n+m}) + \frac{\eta^{(p+1)}}{1-\eta^{(p+1)} c^{(p+1)}_m} \delta^{p+1}\phi - \phi(t_{n+m}) = \frac{\eta^{(p+1)}}{1-\eta^{(p+1)} c^{(p+1)}_m} \delta^{p+1}\phi.
\end{multline}
If $\phi$ is smooth, then for some $\xi \in (t_{n},t_{n+m})$,
\begin{equation}\notag
\delta^{p+1}\phi = \phi[t_{n+m},t_{n+m-1},\cdots,t_n] = \frac{\phi^{(p+1)}(\xi)}{(p+1)!},
\end{equation}
see e.g. \cite{isaacson}. Hence, $(L_i\phi)\left(\sum_{i=0}^m \beta_i t_{n+i}\right)$ is zero on polynomials of degree less than or equal to $p$, so $p_i = p$. Thus, the order of the the method is $\min(p_d,p_i+1) = p+1$.
\end{proof}

\subsection{Error Estimation\label{sec:error_est}}
Let $y^{p+1}_{n+m}$ theFBDF$p$+1 approximation. As a consequence FBDF$p$+1 being $\mathcal{O}(k^{p+1})$, we have that $$Est_p = y^{p+1}_{n+m}-y^p_{n+m}$$ (see Algorithm \ref{alg:bdf-filter}) is an estimate for the local error in BDF$p$. In order to estimate the local error for FBDF$p$+1, we need to estimate \eqref{eqn:lte-est} which involves the exact solution. To approximate $L_i y$, note from \eqref{eqn:solved_for_temp_value} and \eqref{eqn:interp-error-fbdf}
\begin{equation}\notag
L_iy(t) = \frac{\eta^{(p+1)}}{1-\eta^{(p+1)} c^{(p+1)}_m} \delta^{p+1}y(t) \approx y^p_{n+m}-y^{p+1}_{n+m} = -Est_p.
\end{equation}
To approximate the rest of \eqref{eqn:lte-est}, use $y(t_{n+i}) \approx y_{n+i}$ a possible error estimate of FBDF$p$+1 is
\begin{gather}\label{eqn:error_est_fbdf}
Est_{p+1} = \bigg(\sum_{j=1}^{p+1} \left[\prod_{i=1}^{j-1}(t_{n+m} - t_{n+m-i}) \right]\delta^j y^{p+1} - f(t_{n+m},y^{p+1}_{n+m})\\
  +  f_y(t_{n+m},y^{p+1}_{n+m})\cdot Est_p\bigg)\bigg/\bar{\alpha}^{(p+1)}_m \notag
\end{gather}

However, a more successful estimator in our tests was to only estimate $L_d$ with
\begin{gather}\label{eqn:error_est_fbdf_difference}
Est_{p+1} = \bigg(\sum_{j=1}^{p+1} \left[\prod_{i=1}^{j-1}(t_{n+m} - t_{n+m-i}) \right]\delta^j y^{p+1} - f(t_{n+m},y^{p+1}_{n+m})\bigg)\bigg/\bar{\alpha}^{(p+1)}_m .
\end{gather}
\eqref{eqn:error_est_fbdf} seemingly underpredicted the error, which lead to overly large stepsizes. Using \eqref{eqn:error_est_fbdf_difference} is pessimistic because it does not allow for cancellation with the neglected term. It's success may be due to the fact that enforcing small $Est_{p+1}$ also forces the $y^{p+1}_{n+m}$ to make the BDF$p$+1 residual small. Thus, the solution, up to a scaling, is within $\varepsilon$ of satisfying a nearby method of order $p$
\subsection{Order Barrier\label{sec:neg_result}}
We prove in this section that it is impossible to increase the order of the simplest method, BDF1, by more than one by filtering $y_{n+m}$ alone. It is an open problem whether this is generally true for other methods.

\begin{theorem}[An Order Barrier]\label{thm:nonexistence}
BDF1 followed by a linear time filter of arbitrary finite length, 
\begin{equation}\notag
\tilde{y}_{n+m}- y_{n+m-1} = k f(\tilde{y}_{n+m})
\end{equation}
\begin{equation}\label{eqn:arbitrary_be_filter}
y_{n+m} = \tilde{y}_{n+m} - (c_m\tilde{y}_{n+m} + c_{m-1}y_{n+m-1} + \cdots +c_0y_n) \hspace{10mm} c_m\neq 0
\end{equation}
is of no higher than second order consistency.
\end{theorem}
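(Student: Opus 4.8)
The plan is to follow the route of Theorem~\ref{thm:fbdf-order}: recast the two-step ``Backward Euler + linear filter'' scheme as a single one-leg method of the form \eqref{eqn:olm_bar}, then read off its order from the differentiation and interpolation error operators of Definition~\ref{def:diff-int-error} via Definition~\ref{def:annihilating}. Assuming $c_m\neq 1$ (if $c_m=1$, \eqref{eqn:arbitrary_be_filter} makes $y_{n+m}$ a fixed linear combination of $y_n,\dots,y_{n+m-1}$, not a consistent discretization), solve \eqref{eqn:arbitrary_be_filter} for the intermediate value: $\tilde y_{n+m}=\sum_{i=0}^m\beta_i y_{n+i}$ with $\beta_m=1/(1-c_m)$ and $\beta_i=c_i/(1-c_m)$ for $i<m$. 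Substituting into $\tilde y_{n+m}-y_{n+m-1}=kf(\tilde y_{n+m})$ produces the OLM $\sum_i\bar\alpha_i y_{n+i}=f\big(\sum_i\beta_i y_{n+i}\big)$ in which $\bar\alpha_i=\beta_i/k$ for every $i\neq m-1$, while $\bar\alpha_{m-1}=\beta_{m-1}/k-1/k$ carries one extra $-1/k$, the rigid remnant of the term $-y_{n+m-1}/k$ in Backward Euler. That remnant is the whole story.

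Suppose, for contradiction, the scheme has order $\ge 3$. By Definition~\ref{def:annihilating} this forces $p_d\ge 3$ and $p_i\ge 2$. From $p_d\ge 3$, $L_d$ in particular annihilates $1$ and $t$; working out these two conditions gives $\sum_{i=0}^m c_i=0$ and $\sum_{i=0}^m i c_i=0$, and the second pins the evaluation node $t^\ast:=\sum_i\beta_i t_{n+i}$ at $t_{n+m}$. With $t^\ast=t_{n+m}$, the relation between the $\bar\alpha_i$ and the $\beta_i$ above collapses in one line to
\begin{equation}\notag
(L_d\phi)(t_{n+m})=\frac1k\,(L_i\phi)(t_{n+m})+\left(\frac{\phi(t_{n+m})-\phi(t_{n+m-1})}{k}-\phi'(t_{n+m})\right),
\end{equation}
whose parenthesized term is exactly the local truncation error of Backward Euler, hence \emph{exactly} first order and nonzero on quadratics.

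To finish, apply this identity to $\phi(t)=t^2$: since $p_i\ge 2$, $L_i$ annihilates $t^2$ and the first term vanishes, while the second equals the Backward Euler truncation error on $t^2$, namely $-k\neq 0$. Thus $(L_d\phi)(t_{n+m})=-k\neq 0$, so $L_d$ does \emph{not} annihilate $t^2$ and $p_d\le 1$, contradicting $p_d\ge 3$. Hence the filtered scheme has order at most two, whatever the filter length $m$. (The bound is sharp: the conditions $\sum_i c_i=0$, $\sum_i i c_i=0$, $\sum_i i^2 c_i=1-c_m$ characterize order two, and their unique $m=2$ solution is the filter of \cite{guzel}.)

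I expect the one genuinely fiddly part to be the bookkeeping in the reduction: checking the relation between the $\bar\alpha_i$ and the $\beta_i$ and that $\sum_i i c_i=0$ places the evaluation node at $t_{n+m}$; after that the barrier is immediate and uniform in $m$. The conceptual content is the displayed splitting of $L_d$ into $\tfrac1k L_i$ plus the (first-order) Backward Euler differentiation error: a linear filter acting only on the newly computed value can be tuned to make $L_i$ as accurate as one likes, but it cannot touch the differentiation part, so Backward Euler's intrinsic $\mathcal{O}(k)$ error resurfaces at the quadratic level and caps the order at two. A direct Taylor expansion of the combined truncation error would also work but is order-by-order and sign-prone; routing through $L_d$ and $L_i$ keeps the argument clean.
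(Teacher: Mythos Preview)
Your proof is correct and takes a genuinely different route from the paper's. Both arguments reduce the scheme to the one-leg form and invoke Definition~\ref{def:annihilating}, but the paper proceeds by brute force: it imposes $L_d 1=L_d t=L_d t^2=0$, extracts three algebraic relations on the (rescaled) filter coefficients, and then carries out a line-by-line simplification of $L_i t^2$ using those relations to arrive at $L_i t^2=k^2\neq 0$, contradicting $p_i\ge 2$. You instead use only the two conditions $L_d 1=L_d t=0$ to pin the evaluation node at $t_{n+m}$, and then observe the structural identity
\[
(L_d\phi)(t_{n+m})=\tfrac{1}{k}(L_i\phi)(t_{n+m})+\Bigl(\tfrac{\phi(t_{n+m})-\phi(t_{n+m-1})}{k}-\phi'(t_{n+m})\Bigr),
\]
which follows immediately from $\bar\alpha_i=\beta_i/k$ for $i\neq m-1$ and $\bar\alpha_{m-1}=(\beta_{m-1}-1)/k$. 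Applied to $t^2$ with $L_i t^2=0$, this gives $L_d t^2=-k\neq 0$, contradicting $p_d\ge 3$. The two contradictions are dual through your identity: the paper forces $L_d t^2=0$ and finds $L_i t^2=k^2$, while you force $L_i t^2=0$ and find $L_d t^2=-k$. Your version is shorter, uses one fewer hypothesis, and---more importantly---explains \emph{why} the barrier holds: the filter can shape $L_i$ arbitrarily, but the $-y_{n+m-1}/k$ term hardwired by Backward Euler contributes an irreducible first-order differentiation error that no post-filter can touch. The paper's calculation establishes the same fact but leaves the mechanism opaque. Your sharpness remark (the three conditions $\sum c_i=0$, $\sum ic_i=0$, $\sum i^2c_i=1-c_m$ characterize order two, with unique $m=2$ solution $c=(1/3,-2/3,1/3)$) is also correct and a nice addition.
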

\begin{proof}For simplicity, we assume the ODE is autonomous, and that the step size is constant with $k_n=k$ for all $n$. Without loss of generality, we need only consider the application of one time filter; a sequence of time filters can be reduced to one of possibly greater length by condensing the intermediate values.

Solving \eqref{eqn:arbitrary_be_filter} for $\tilde{y}_{n+m}$,
\begin{equation}\notag
\tilde{y}_{n+m} = \frac{1}{1-c_m}(y_{n+m}+c_{m-1}y_{n+m-1} + \cdots +c_0y_n ).
\end{equation}
For convenience, we perform the change of variables $\hat{c}_m = (1-c_m)^{-1}$, $\hat{c}_{m-1} = (c_{m-1})(1-c_m)^{-1}-1$, and $\hat{c}_i=c_i(1-c_m)^{-1}$ for $0\leq i \leq m-2$.
\begin{equation}\notag
\tilde{y}_{n+m} = y_{n+m-1} + \sum_{j=0}^m \hat{c}_j y_{n+j}
\end{equation}
Substituting this into the backward Euler step yields the equivalent OLM
\begin{equation}\notag
\sum_{j=0}^m \hat{c}_j y_{n+j} = k_{n+m-1}f(y_{n+m-1} + \sum_{j=0}^m \hat{c}_j y_{n+j}).
\end{equation}

$L_d$ must be zero on polynomials of order 3, and $L_i$ must be zero on polynomials of order 2. We derive conditions on $\hat{c}_i$ from the differential error operator applied to $1,t$, and $t^2$.
\begin{equation}\notag
(L_d1)(t_{n+m}) = k^{-1}\sum_{i=0}^m\hat{c}_i = 0 \Longrightarrow \sum_{i=0}^m\hat{c}_i=0
\end{equation}
\begin{equation}\notag
(L_dt)(t_{n+m}) = k^{-1}\sum_{i=0}^m\hat{c}_i(n+ik) - 1 = \sum_{i=0}^m\hat{c}_ii - 1=0 \Longrightarrow \sum_{i=0}^m\hat{c}_ii = 1
\end{equation}
\begin{align*}
(L_dt^2)(t_{n+m}) &= k^{-1}\sum_{i=0}^m\hat{c}_i(n+ik)^2 - 2(n+mk) = k^{-1}\sum_{i=0}^m\hat{c}_i(n^2+2nik + i^2k^2)\\ &- 2(n+mk)
=2n\sum_{i=0}^m\hat{c}_ii + k\sum_{i=0}^m\hat{c}_ii^2 - 2(n+mk)=0 \Rightarrow \sum_{i=0}^m\hat{c}_ii^2 = 2m
\end{align*}
Next, apply the interpolation error operator to $t^2$.
\begin{align*}
(L_it^2)(t_{n+m})= \sum_{i=0}^m\hat{c}_i(n+ik)^2 + (n+(m-1)k)^2 - (n+km)^2\\
=\sum_{i=0}^m\hat{c}_in^2 + \sum_{i=0}^m\hat{c}_i2nik + \sum_{i=0}^m\hat{c}_i i^2k^2 + n^2 + 2n(m-1)k + (m-1)^2k^2\\
-n^2-2kmn-k^2m^2\\
=2nk\sum_{i=0}^m\hat{c}_ii + k^2\sum_{i=0}^m\hat{c}_ii^2 + 2nmk-2nk+(m^2-2m+1)k^2\\
-2kmn-k^2m^2 = 2nk+2mk^2-2nk+m^2k^2 -2mk^2 +k^2 -k^2m^2 = k^2.
\end{align*}
The operator does not vanish on quadratics, so by Definition \ref{def:annihilating}, the method is no higher than second order.

\end{proof}
\begin{remark}
A sequence of linear post-filters like \eqref{eqn:arbitrary_be_filter} can be reduced to one filter of the same type. Thus, this result means that the order of Backward Euler cannot be improved beyond $\mathcal{O}(k^2)$ by adding more filters.
\end{remark}
\subsection{Stabilizing Time Filters\label{sec:bdf3stab}}
Adaptive codes using non-$A$-stable methods will, when necessary, decrease timesteps to enforce stability rather than accuracy. We thus need an $A$-stable member in the embedded family. This is achieved automatically for BDF1 plus a time filter (FBDF2), but we are limited to lower order since filters can increase the order of accuracy by only one (see Theorem \ref{thm:nonexistence}). Thus, we construct time filters that create lower order, but $A$-stable, embedded methods from BDF3 for constant stepsize. The result (BDF3-Stab below) is second order, $G$-stable and therefore $A$-stable by \cite{dahlquist78}. We generalize the filter for variable stepsize. We begin with the ansatz that such a filter should be a third order perturbation of BDF3.
\begin{algorithm}[Constant Stepsize Stabilized BDF3, BDF3-Stab]
\begin{flalign}
&\text{BDF3 Step}  &&\frac{11y^{3}_{n+3}-18y_{n+2} +9y_{n+1} - 2y_{n}}{6k} = f(t_{n+3},y_{n+3}^{3}) \notag \\
&\text{BDF3-Stab Step}  &&y_{n+3} = y^{3}_{n+3} + \mu(y^{3}_{n+3}-3 y_{n+2} + 3 y_{n+1} - y_{n})\label{eqn:BDF3-stab-filter}
\end{flalign}
\end{algorithm}
The induced OLM after eliminating the intermediate variable is
\begin{multline}\label{BDF3-2}
\frac{11y_{n+3}-18y_{n+2} +9y_{n+1} - 2y_{n}}{6k} - \frac{11}{6k}\frac{\mu}{1+\mu} (y_{n+3}-3 y_{n+2} + 3 y_{n+1} - y_{n})\\= f\left(t^{n+3},y_{n+3} - \frac{\mu}{1+\mu} (y_{n+3}-3 y_{n+2} + 3 y_{n+1} - y_{n}) \right)
\end{multline}
From \eqref{BDF3-2}, a Taylor series calculation shows that the method is second order consistent with 
\begin{equation}\notag
\text{leading term of }\varepsilon^n = -\frac{11}{6}\frac{\mu}{1+\mu}y'''k^2.
\end{equation}  What remains is to show that it may be $G$-Stable for a range of $\mu$. 
\begin{theorem}\label{thm:G-stab}
BDF3-Stab is $G$-Stable for $\mu \in$ [0.07143215,0.14285528]
\end{theorem}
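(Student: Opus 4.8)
The plan is to invoke the Dahlquist equivalence between $G$-stability and $A$-stability via the OLM form \eqref{BDF3-2}, and to reduce the claim to a positivity condition on a trigonometric polynomial coming from the boundary locus. First I would write \eqref{BDF3-2} in the standard OLM notation \eqref{eqn:olm_bar}, reading off the first characteristic polynomial $\rho(r)$ and second characteristic polynomial $\sigma(r)$ as functions of $\mu$: $\rho(r) = \tfrac{1}{6}\big[(11 - \tfrac{11\mu}{1+\mu})r^3 - (18 - \tfrac{33\mu}{1+\mu})r^2 + (9 - \tfrac{33\mu}{1+\mu})r - (2 - \tfrac{11\mu}{1+\mu})\big]$ and $\sigma(r) = r^3 - \tfrac{\mu}{1+\mu}(r-1)^3$. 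By Dahlquist's theorem (\cite{dahlquist78}), the one-leg method is $G$-stable if and only if it is $A$-stable, which holds if and only if $\rho/\sigma$ maps the exterior of the unit disk into the right half-plane; equivalently (using that the method is consistent, so $\rho(1)=0$ and $\sigma(1)\neq 0$, and checking no spurious roots of $\rho$ lie outside the closed unit disk — this is the $0$-stability already established for BDF3), it suffices to show
\begin{equation}\notag
\re\frac{\rho(e^{i\theta})}{\sigma(e^{i\theta})} \geq 0 \qquad \text{for all } \theta \in [0,2\pi).
\end{equation}

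Next I would clear denominators: the sign of $\re\big(\rho(e^{i\theta})/\sigma(e^{i\theta})\big)$ is the sign of $\re\big(\rho(e^{i\theta})\,\overline{\sigma(e^{i\theta})}\big)$, so set $P(\theta;\mu) \defeq \re\big(\rho(e^{i\theta})\overline{\sigma(e^{i\theta})}\big)$. Substituting $r = e^{i\theta}$ and using $r^j + r^{-j} = 2\cos j\theta$, $P(\theta;\mu)$ becomes a polynomial in $\cos\theta$ (degree at most $3$) whose coefficients are rational functions of $\mu$; writing $x = \cos\theta \in [-1,1]$ and clearing the positive factor $(1+\mu)^2$, the condition becomes $Q(x;\mu) \geq 0$ on $[-1,1]$ for a cubic (in $x$) with polynomial-in-$\mu$ coefficients. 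One then analyzes $Q$: at the endpoints $x = 1$ one should get $Q(1;\mu) = 0$ (reflecting consistency, $\rho(1)=0$), so $x=1$ is always a root; factor $Q(x;\mu) = (1-x)\,R(x;\mu)$ with $R$ quadratic in $x$, and the problem reduces to showing $R(x;\mu)\geq 0$ on $[-1,1]$. This is handled by the usual quadratic-on-an-interval criterion: check the sign of the leading coefficient, the values $R(-1;\mu)$ and $R(1;\mu)$, and — if the vertex lies in $[-1,1]$ — the discriminant. Each of these is a polynomial inequality in the single variable $\mu$, and the interval $[0.07143215, 0.14285528]$ is precisely the intersection of the resulting feasible sets; the endpoints are the roots of whichever polynomial constraints are active (one expects the discriminant condition to give the tight bounds, with the endpoints being algebraic numbers approximated by the stated decimals).

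The main obstacle is the bookkeeping in the middle step: correctly expanding $\rho(e^{i\theta})\overline{\sigma(e^{i\theta})}$, collecting the real part as a genuine polynomial in $\cos\theta$ (the $\sin\theta$ contributions must cancel, which they will because $P$ is even in $\theta$), and tracking the $\mu$-dependence through the factorization so that the endpoint values come out exactly as the roots of the relevant discriminant. A secondary point requiring care is the logical structure of the Dahlquist criterion in the variable-coefficient-free (constant stepsize) setting: one must confirm that $0$-stability of BDF3 plus the boundary-locus positivity genuinely yields $G$-stability, i.e. that no root condition is being overlooked and that $\sigma$ has no zeros on $|r|=1$ that would make $\rho/\sigma$ ill-defined on the contour — checking $\sigma(e^{i\theta}) \neq 0$ for $\mu$ in the claimed range is a small but necessary lemma. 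Once these are in place, the verification is a finite computation and I would present it as such, displaying $Q(x;\mu)$, its factorization, and the three or four scalar inequalities in $\mu$ whose solution set is the stated interval.
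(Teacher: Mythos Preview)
Your approach is correct in spirit but takes a genuinely different route from the paper. The paper proves $G$-stability \emph{directly}: it posits a $3\times 3$ symmetric matrix $G$ and constants $a_0,\dots,a_3$, writes down the $G$-stability identity (product of the $\rho$- and $\sigma$-expressions equals a telescoping $G$-norm difference plus a perfect square), matches coefficients to obtain ten nonlinear equations in the $g_{ij}$, $a_i$, and $\mu$, solves this system symbolically (via MATLAB), and then verifies positive-definiteness of $G$ by Sylvester's criterion---the admissible $\mu$-interval is read off from plots of the principal minors. You instead invoke Dahlquist's equivalence $G$-stable $\Leftrightarrow$ $A$-stable \cite{dahlquist78} and reduce to the boundary-locus inequality $\re\big(\rho(e^{i\theta})\overline{\sigma(e^{i\theta})}\big)\geq 0$, which becomes a polynomial inequality in $\cos\theta$ and $\mu$.

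Your route is arguably cleaner: it avoids the nonlinear $G$-matrix system and, if carried through, would deliver the endpoints of the $\mu$-interval as exact algebraic numbers (indeed, the factors $(7\mu-1)(14\mu-1)$ visible in the paper's appendix strongly suggest the true interval is $[1/14,1/7]$, of which the paper's numerically-determined $[0.07143215,0.14285528]$ is a slight under-approximation from reading plots). The paper's route, on the other hand, produces an explicit $G$-matrix, which is independently useful for energy estimates in nonlinear evolution problems. One small slip in your outline: you cannot appeal to ``the $0$-stability already established for BDF3,'' because BDF3-Stab has a \emph{different} first characteristic polynomial $\rho$ (it is a $\mu$-dependent perturbation of BDF3's). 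You must verify $0$-stability of this perturbed $\rho$ separately for $\mu$ in the claimed range---a routine root-locus check, but not something you get for free from BDF3. Similarly, the check that $\sigma(e^{i\theta})\neq 0$ on the circle (needed to make the boundary-locus argument rigorous) should be done for the perturbed $\sigma(r)=r^3-\tfrac{\mu}{1+\mu}(r-1)^3$, not for BDF3's $\sigma$.
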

\begin{proof}
Multiplying \eqref{BDF3-2} by $\frac{6k}{11}$ gives
\begin{multline}\notag
\frac{y_{n+3}}{1+\mu} + \left(\frac{3\mu}{1+\mu}-\frac{18}{11}\right)y_{n+2} + \left(- \frac{3\mu}{1+\mu}+\frac{9}{11}\right)y_{n+1} +\left(\frac{\mu}{1+\mu}-\frac{2}{11}\right)y_{n} \\
=\frac{6}{11}kf\left(t_{n+3},\frac{y_{n+3}}{1+\mu} + \frac{3\mu}{1+\mu}y_{n+2} - \frac{3\mu}{1+\mu}y_{n+1} + \frac{\mu}{1+\mu}y_{n} \right).
\end{multline}
Define the G-matrix as
\[
\left(\begin{array}{ccc}
g33 &g32 &g31\\
g32 &g22 &g21\\
g31 &g21 &g11
\end{array}\right),
\]
with the associated $G$-norm given by $\|v\|^2_{G}=(v,Gv)$.

We show there exists a symmetric positive definite $G$ and some constants $a_3$, $a_2$, $a_1$, $a_0$ such that 
\begin{eqnarray*}
\begin{aligned}
&\left(\frac{y_{n+3}}{1+\mu} + \left(\frac{3\mu}{1+\mu}-\frac{18}{11}\right)y_{n+2} + \left(- \frac{3\mu}{1+\mu}+\frac{9}{11}\right)y_{n+1} +\left(\frac{\mu}{1+\mu}-\frac{2}{11}\right)y_{n}\right) \\
&\cdot\left(\frac{y_{n+3}}{1+\mu} + \frac{3\mu}{1+\mu}y_{n+2} - \frac{3\mu}{1+\mu}y_{n+1} + \frac{\mu}{1+\mu}y_{n} \right)\\
& =\left\|\begin{array}{c}
y_{n+3}\\
y_{n+2}\\
y_{n+1}
\end{array}\right\|_G^2-\left\|\begin{array}{c}
y_{n+2}\\
y_{n+1}\\
y_{n}
\end{array}\right\|_G^2 +\|a_3 y_{n+3}+a_2 y_{n+2} +a_1 y_{n+1}+a_0 y_{n}\|^2.
\end{aligned}
\end{eqnarray*}
By matching the coefficients of  $y_i$ on the left hand side and right hand side, we get the following equations:
\begin{equation}\label{lemma1.1}
\left\{
             \begin{array}{lr}
             g_{33}+(a_3)^2=(\frac{1}{1+\mu})^2, &  \\
             g_{22}-g_{33}+(a_2)^2=\frac{45\mu^2-54\mu}{11(1+\mu)^2},& \\
             g_{11}-g_{22}+(a_1)^2=\frac{72\mu^2-27\mu}{11(1+\mu)^2}, &  \\
             (a_0)^2-g_{11}=\frac{9\mu^2-2\mu}{11(1+\mu)^2},&\\
             2g_{32}+2a_3a_2=\frac{48\mu-18}{11(1+\mu)^2},&\\
             2g_{31}+2a_3a_1=\frac{-57\mu+9}{11(1+\mu)^2},&\\
             2a_3a_0=\frac{20\mu-2}{11(1+\mu)^2},&\\
             2g_{21}-2g_{32}+2a_2a_1=\frac{-117\mu^2+81\mu}{11(1+\mu)^2},&\\
             -2g_{31}+2a_2a_0=\frac{42\mu^2-24\mu}{11(1+\mu)^2},&\\
             -2g_{21}+2a_1a_0=\frac{-51\mu^2+15\mu}{11(1+\mu)^2}.&\\

             \end{array}
\right.
\end{equation}
We solved for $g_{ij}$ and $a_i$ in terms of $\mu$ with MATLAB, and write them explicitly in the appendix. Using Sylvester’s criterion, we seek an interval of $\mu$ for which the principle minors of $G$ have positive determinant. We denote 
\begin{gather}
\displaystyle G_1=g _{33},\ \ G_2=\left|\begin{array}{cc}
g_{33} &g_{32}\\
g_{23} &g_{22}
\end{array}\right|, \ \ \text{and} \ \ G_3=\det(G).
\end{gather}
\begin{figure}
\centering

\begin{subfigure}{0.49\linewidth}
   \centering
   \includegraphics[width = 1\linewidth]{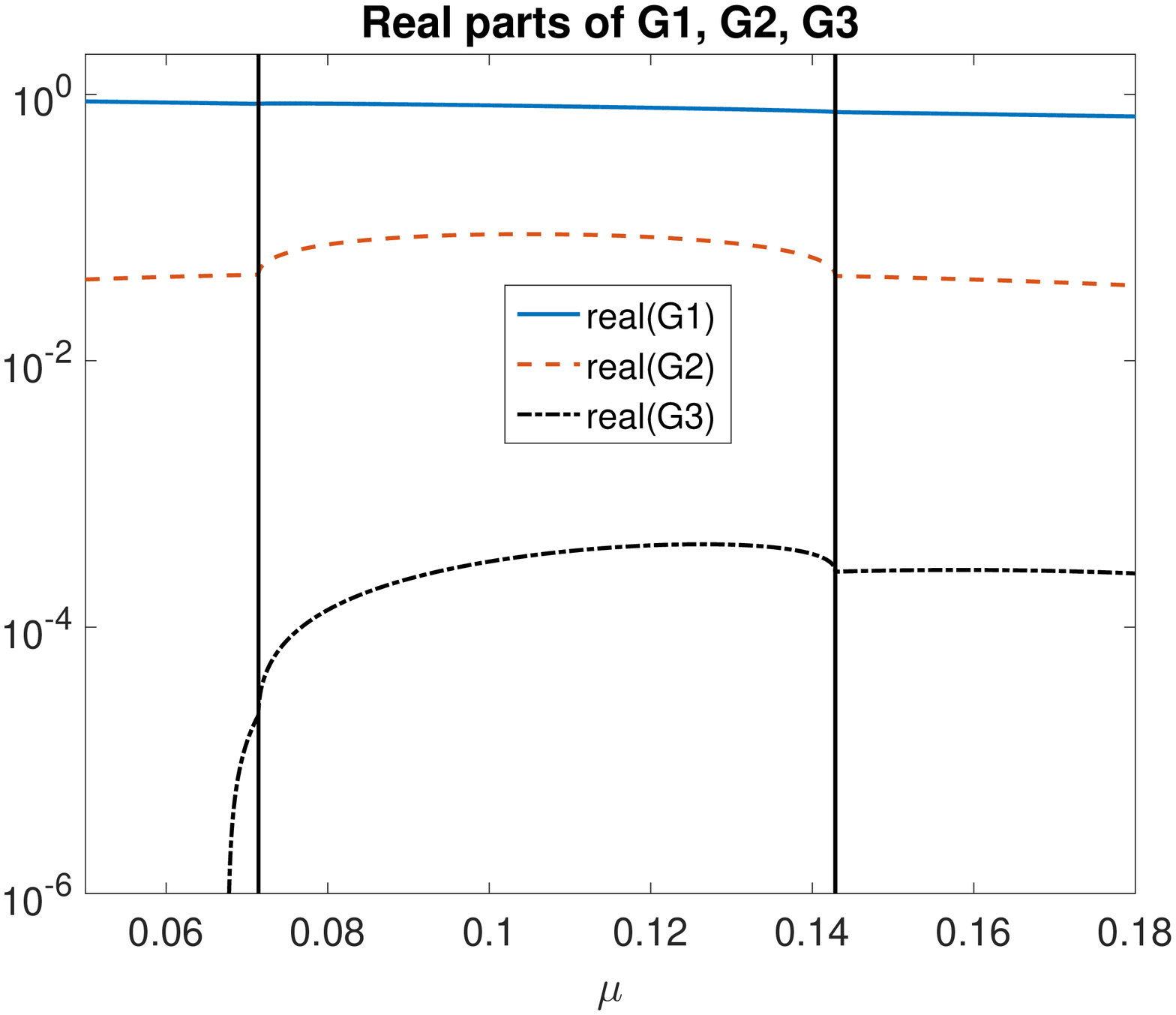}
   \caption{ \label{mu2}}
\end{subfigure}
\begin{subfigure}{0.49\linewidth}
   \centering
   \includegraphics[width = 1\linewidth]{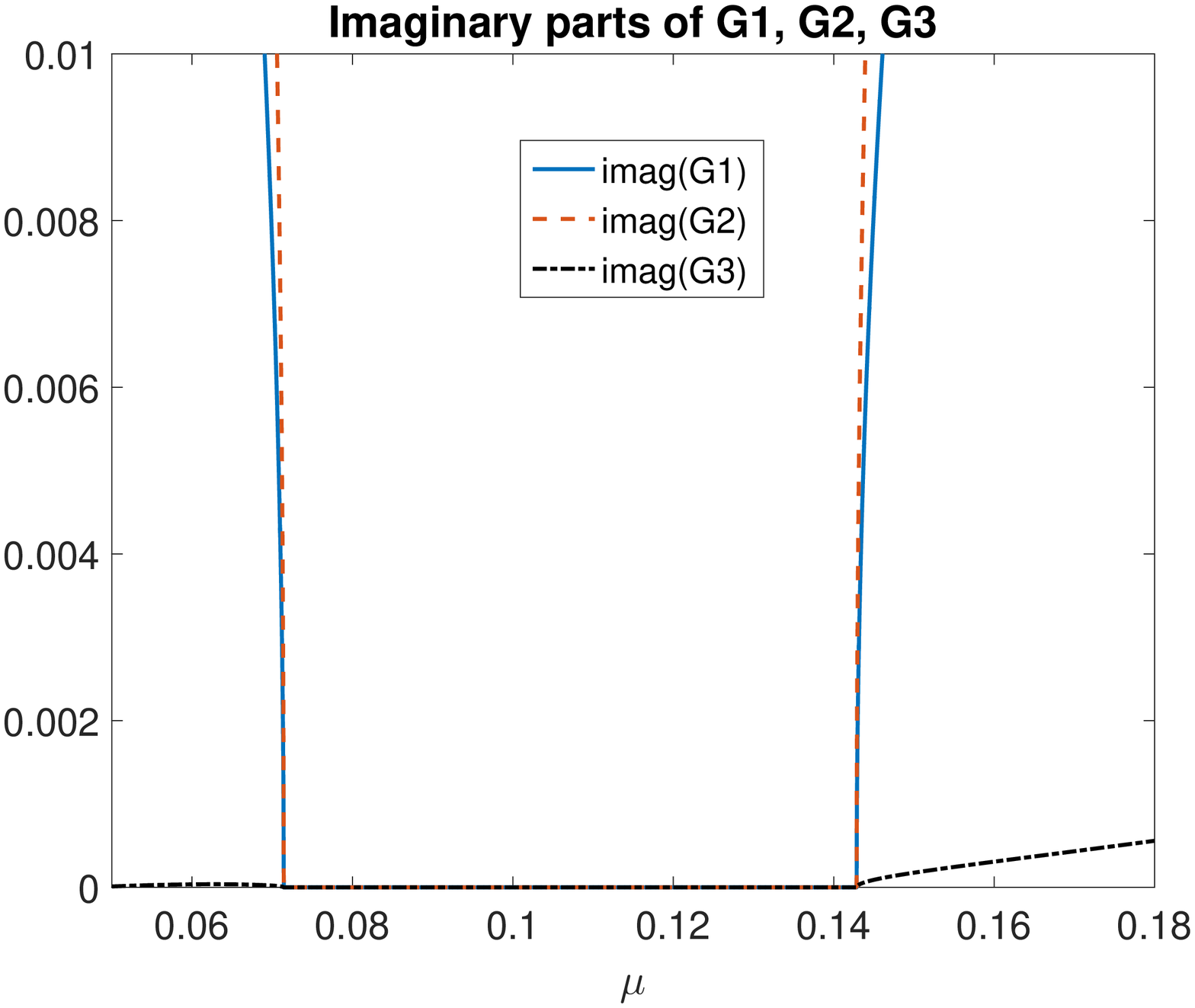}
   \caption{ \label{mu3}}
\end{subfigure}
\caption{The real parts of $G1, G2$ and $G3$ are positive and the imaginary parts vanish in the region of $G$ stability, which is bounded by the vertical bars in Fig. \ref{mu2}. \label{fig:mu}\selectfont{}}
\end{figure} 
By plotting the real and imaginary parts of $G1$, $G2$, and $G3$, we see that the real parts are positive, and the imaginary parts vanish within the interval $\mu \in$ [0.07143215,0.14285528], which implies that BDF3-Stab is $G$ stable for $\mu$ in this interval. 

\end{proof}
Dahlquist's Second Barrier states that the leading error constant $C$ of all $A$-Stable methods is, in magnitude, greater than or equal to $C_{TR}=1/12$, which is attained by the trapezoid rule \cite{dahlquist1963}. For the left endpoint of the interval given in Theorem \ref{thm:G-stab}, BDF3-Stab has a leading error constant of $\frac{1}{12} < C_{\text{BDF3-Stab}}\approx0.1222 < \frac{2}{12}$. This is about 2.73 times smaller than $C_{BDF2}=\frac{4}{12}$ (compare with the optimized blended BDF2/BDF3 and BDF2/BDF3/BDF4 schemes in \cite{vatsa2010} which have respectively 2 and 2.64 times smaller leading constants than $C_{BDF2}$).

The filter is extended to variable stepsize by replacing $y^{3}_{n+3}-3 y_{n+2} + 3 y_{n+1} - y_{n}$ with the third order divided difference $\delta^j$, and rescaling by the leading coefficient $c^{(3)}_3$.
\begin{equation}\label{eqn:BDF3-2-variable}
y^{2}_{n+3} = y^{3}_{n+3} + \frac{\mu}{c^{(3)}_3}\delta^3y^{3},
\end{equation}
 This gives a variable stepsize BDF3 method a stable method to switch to rather than cutting the timestep. Other extensions of this filter to variable stepsize, or $G$-Stabilizing filters using more previous values to reduce the error constant may be possible.
 \begin{remark}
 Note this extension to variable stepsize is not unique. We simply rescaled the third divided difference with something that makes it consistent with \eqref{eqn:BDF3-stab-filter} for constant time step. In this case, we multiply by $\mu/c^{(3)}_3$. Other possible extensions that were not tested, and in no particular order, are
 \begin{equation}\notag
y^{2}_{n+3} = y^{3}_{n+3} + \frac{\mu}{c^{(3)}_0}\delta^3y^{3},
\end{equation}
 \end{remark}
\section{The VSVO Algorithm\label{sec:algorithm}}
In Section \ref{sec:derivation}, we derived a general embedded \\method FBDF$p$+1 which increases the order of any BDF$p$ method by one, and an embedded stabilized BDF3 method (BDF3-Stab) that is second order and $G$-Stable. We combine FBDF4 and BDF3-Stab to create an embedded implicit method that is second, third, and fourth order called Multiple Order One Solve Embedded 234 (MOOSE234). 
After a filter is applied, the new solution is used to estimate the error in the pre-filtered solution. We then pick the solution which allows for the largest stepsize to be taken. The variable stepsize algorithm is as follows.
\begin{algorithm}[MOOSE234\label{alg:vsvo}]

Let $m=4$. Given $\varepsilon$, $\tilde{\gamma}$, $\gamma$, $\{y_{n}, \cdots,y_{n+3}\}$, compute $y^{2}_{n+4},y^{3}_{n+4},y^{4}_{n+4}$ by solving
\begin{flalign*}
&\text{BDF3}  &&\sum_{j=1}^3 \left[\prod_{i=1}^{j-1}(t_{n+4} - t_{n+4-i}) \right]\delta^j y^{3} = f(t_{n+4},y^{3}_{n+4})\\
&\text{BDF3-Stab}  &&y^{2}_{n+4} = y^{3}_{n+4} + \frac{9}{125c^{(3)}_4}\delta^3 y^{3}\\
&\text{FBDF4}  &&y^{4}_{n+4} = y^{3}_{n+4} - \eta^{(4)}\delta^4 y^{3}.
\end{flalign*}
Put
\begin{alignat}{3}
&\text{Est}_2 = &&y^{3}_{n+4}-y^{2}_{n+4}\notag\\
&\text{Est}_3 = &&y^{4}_{n+4}-y^{3}_{n+4}\notag\\
&\text{Est}_4 = \bigg(&&\sum_{j=1}^4 \left[\prod_{i=1}^{j-1}(t_{n+4} - t_{n+4-i}) \right]\delta^j y^{4} - f(t_{n+4},y^{4}_{n+4})\bigg)\bigg/\bar{\alpha}^{(4)}_4 \label{eqn:lte_est_4}
\end{alignat}
Of the solutions that satisfy $|Est_i| < \varepsilon$, find $j$ that would allow a maximum step size to be taken.
\begin{equation}\label{eqn:order_selection}
j = \argmax_{i\in\{2,3,4\}} \left(\frac{\varepsilon}{|Est_i|}\right)^{\frac{1}{i+1}}
\end{equation}
Then set
\begin{equation}\label{eqn:stepsize_selection}
k_{n+4} = \gamma k_{n+3} \left(\frac{\varepsilon}{|Est_j|}\right)^{\frac{1}{j+1}}
\end{equation}
and
\begin{equation}\notag
y_{n+4}=y^{j}_{n+4}.
\end{equation}
If none satisfy the tolerance, set
\begin{equation}\notag
k_{n+3} \defeq \max_j \tilde{\gamma}k_{n+3}\left(\frac{\varepsilon}{|Est_j|}\right)^{\frac{1}{j+1}},
\end{equation}
and recompute the above steps.\\
\end{algorithm}

\eqref{eqn:stepsize_selection} is a standard formula for stepsize selection given an estimate for the local truncation error \cite{Shampine2005},\cite{griffiths},\cite{ahmad}.
$\gamma \leq 1$ is a safety factor used to keep the next step size from growing too fast to increase the chance that the next solution will be accepted. If a solution is rejected, $\tilde{\gamma} \leq \gamma$ is a second factor used to nudge the stepsize in a direction that will make the recomputed solution more likely to be accepted. We took $\gamma = 0.9$, and $\tilde{\gamma}=0.7$.

For step size control, we use the most popular criteria \cite{Shampine2005}, \cite{griffiths} which is to require the local truncation error be less than a user supplied tolerance $\varepsilon$. Another possible criterion for adapting, not tested herein, is to require the error per unit time interval be less than tolerance, $\text{LTE} <k_n\varepsilon$ \cite{conte},\cite{Shampine2005}. In our implementation, we add a common heuristic that the step size can change by no more than a factor of two at a time to avoid rejections \cite{ahmad}, although this may be overly cautious since factors as large as five have been used for adaptive BDF methods \cite{HAY2015151}.  Many other considerations for implementation and improvement of adaptive methods are discussed in the PhD thesis of Ahmad \cite{ahmad}.

The algorithm above is of variable order two through four, but different methods can be obtained by taking a max in \eqref{eqn:order_selection} over a subset of $\{2,3,4\}$, which is tested numerically in Section \ref{sec:numerical}.

\subsection{Concrete implementation of MOOSE234\label{sec:alt_implementation}}
We now state formulas for the coefficients used in the algorithm with stepsize ratios $\tau_n = \frac{k_{n}}{k_{n-1}}$ rather than divided differences. We will fix $\bar{k}_n \defeq k_n$. Then BDF$p$ is written 
\begin{equation}\notag
\frac{1}{k_{n+3}}\left( \sum_{i=0}^p \alpha^{(p)}_i y_{n+i} \right)= f(t_{n+p},y_{n+p})
\end{equation} The coefficients for this are well known \cite{wang2011}. For completeness, we include the coefficients for variable stepsize BDF3 and BDF4 in this section.  We will also show how the time filter coefficients can be expressed in terms of the BDF coefficients.
\begin{flalign*}
&\text{BDF3}  && \frac{1}{k_{n+3}}\left(\alpha^{(3)}_4y^{3}_{n+4} + \sum_{i=1}^3 \alpha^{(3)}_i y_{n+i} \right)= f(t_{n+4},y^{3}_{n+4})\\
&\text{BDF3-Stab}  &&y^{2}_{n+4} = y^{3}_{n+4} +C_4 y^{3}_{n+4}+ \sum_{i=1}^3C_i y_{n+i}\\
&\text{FBDF4}  &&y^{4}_{n+4} = y^{3}_{n+4} + D_4 y^{3}_{n+4}+ \sum_{i=0}^3D_i y_{n+i}
\end{flalign*}
Put
\begin{alignat}{3}
&\text{Est}_2 = &&y^{3}_{n+4}-y^{2}_{n+4}\notag\\
&\text{Est}_3 = &&y^{4}_{n+4}-y^{3}_{n+4}\notag\\
&\text{Est}_4 = \bigg(&&\alpha^{(4)}_4y^{4}_{n+4} + \sum_{i=1}^3 \alpha^{(4)}_i y_{n+i} - k_{n+3}f(t_{n+4},y^{4}_{n+4}) \bigg)\bigg/\alpha^{(4)}_4 \label{eqn:lte_est_4_explicit}
\end{alignat}
The coefficients for below BDF3, below, are padded with an extra zero to make the formulas for the filter coefficients more clear. These coefficients were generated using Mathematica.\\
\noindent
\textbf{BDF3 coefficients ($\alpha^{(3)}_i$)}
\begin{align*}
&\alpha^{(3)}_0 = 0\\
&\alpha^{(3)}_1 =-\frac{\tau _ {n+2}^3 \tau _ {n+3}^2 \left(1+\tau _{n+3}\right)}{\left(1+\tau _{n+2}\right) \left(1+\tau _ {n+2} \left(1+\tau_{n+3}\right)\right)}\\
&\alpha^{(3)}_2 =\tau _ {n+2} \tau _ {n+3}^2+\frac{\tau _ {n+3}^2}{1+\tau _{n+3}}\\
&\alpha^{(3)}_3 = -1-\tau _{n+3}-\frac{\tau _ {n+2} \tau _ {n+3} \left(1+\tau _{n+3}\right)}{1+\tau _{n+2}}\\
&\alpha^{(3)}_4 = 1+\frac{\tau _{n+3}}{1+\tau _{n+3}}+\frac{\tau _ {n+2} \tau_{n+3}}{1+\tau _ {n+2} \left(1+\tau _{n+3}\right)}\\
\end{align*}
\textbf{BDF4 coefficients ($\alpha^{(4)}_i$)}
\begin{align*}
\alpha^{(4)}_0 =& \frac{\tau _ {n+1}^4 \tau _ {n+2}^3 \tau _ {n+3}^2 \left(1+\tau_{n+3}\right) \left(1+\tau _ {n+2} \left(1+\tau _{n+3}\right)\right)}{\left(1+\tau _{n+1}\right) \left(1+\tau _ {n+1} \left(1+\tau_{n+2}\right)\right) \left(1+\tau _ {n+1} \left(1+\tau _ {n+2} 
\left(1+\tau _{n+3}\right)\right)\right)}\\
\alpha^{(4)}_1 =&-\frac{\tau _ {n+1} \tau _ {n+2}^3 \tau _ {n+3}^2 \left(1+\tau _{n+3}\
\right)}{1+\tau _{n+2}}-\frac{\tau _ {n+2}^3 \tau _ {n+3}^2 \left(1+\
\tau _{n+3}\right)}{\left(1+\tau _{n+2}\right) \left(1+\tau _ {n+2} \
\left(1+\tau _{n+3}\right)\right)}\\
\alpha^{(4)}_2 =&\tau _ {n+2} \tau _ {n+3}^2+\frac{\tau _ {n+3}^2}{1+\tau _{n+3}}+\frac{\tau _ {n+1} \tau _ {n+2} \tau _ {n+3}^2 \left(1+\tau _ {n+2} \left(1+\tau _{n+3}\right)\right)}{1+\tau _{n+1}}\\
\alpha^{(4)}_3 =& -1-\tau _{n+3}-\frac{\tau _ {n+2} \tau _ {n+3} \left(1+\tau _{n+3}\right)}{1+\tau _{n+2}}\\
&-\frac{\tau _ {n+1} \tau _ {n+2} \tau _ {n+3} \
\left(1+\tau _{n+3}\right) \left(1+\tau _ {n+2} \left(1+\tau _{n+3}\right)\right)}{\left(1+\tau _{n+2}\right) \left(1+\tau _ {n+1} \
\left(1+\tau _{n+2}\right)\right)}\\
\alpha^{(4)}_4 =& 1+\tau _ {n+3} \left(\frac{1}{1+\tau _{n+3}}+\frac{\tau _{n+2}}{1+\tau _ {n+2} \left(1+\tau _{n+3}\right)}\right)+\frac{\tau _ {n+1} \tau _ {n+2} \tau _{n+3}}{1+\tau _ {n+1} \left(1+\tau _ {n+2} \left(1+\tau _{n+3}\right)\right)}
\end{align*}
\noindent
\textbf{BDF3-Stab coefficients ($C_i$)}\\

\begin{align*}
C_0 = &0\\
C_1 =&-\frac{\mu  \tau _ {n+2}^2 \tau _ {n+3} \left(1+\tau _{n+3}\right)}{1+\tau _{n+2}}\\
C_2 = & \mu  \tau _ {n+3} \left(1+\tau _ {n+2} \left(1+\tau _{n+3}\right)\right)\\
C_3 = &-\frac{\mu  \left(1+\tau _{n+3}\right) \left(1+\tau _ {n+2} \left(1+\tau _{n+3}\right)\right)}{1+\tau _{n+2}}\\
C_4 = &\mu
\end{align*}
\noindent
\textbf{FDF4 filter coefficients ($D_i$)}
$$\gamma_i = \frac{\alpha^{(3)}_i-\alpha^{(4)}_i}{\alpha^{(3)}_4}$$
\begin{align*}
D_4 &= \frac{\gamma_4}{1-\gamma_4}\\
D_i &= \gamma_i (1+D_4) \hspace{10mm} \text{for }0 \leq i \leq 3. 
\end{align*}

The time filter coefficients $D_i$ for FBDF4 can be implemented with knowledge of the BDF3 and BDF4 coefficients alone. We now show the derivation of the formulas for $D_i$. The goal as usual is to eliminate the time filter step to yield an equivalent method. We start with the equation for the time filter omitting the super script 4,
\begin{equation}\notag
y_{n+4} = y^{3}_{n+4} + D_4 y^{3}_{n+4}+ \sum_{i=0}^3D_i y_{n+i}
\end{equation}
Add $D_4y^{4}_{n+4}$ to both sides,
\begin{equation}\notag
y_{n+4} + D_4y^{4}_{n+4}= y^{3}_{n+4} + D_4 y^{3}_{n+4}+ \sum_{i=0}^4D_i y_{n+i}.
\end{equation}
Solving for $y^{3}_{n+4}$,
\begin{equation}\notag
y^{3}_{n+4} = y_{n+4} - \frac{1}{1+D_4}\sum_{i=0}^4D_i y_{n+i}.
\end{equation}
Substituting this into the BDF3 step yields the equivalent method, the \emph{left hand} side of which is
\begin{equation}
\frac{1}{k_{n+3}}\left(\sum_{i=1}^4 \alpha^{(3)}_i y_{n+i} - \frac{\alpha^{(3)}_4}{1+D_4}\sum_{i=0}^4D_i y_{n+i}\right).
\end{equation}
Recall that the time filter was chosen so that the left hand side of the induced FBDF4 method is equal to the left hand side of BDF4. Thus, $D_i$ must satisfy
\begin{equation}\notag
\alpha^{(3)}_i - \frac{\alpha^{(3)}_4}{1+D_4}D_i = \alpha^{(4)}_i
\end{equation}
for $0\leq i \leq 4$. This completely determines $D_i$. Let $$\gamma_i = \frac{\alpha^{(3)}_i-\alpha^{(4)}_i}{\alpha^{(3)}_4}.$$ Solving the $i=4$ first for $D_4$, then substituting this into the other equations gives the coefficients as stated above.
\begin{remark}These formulas are easily generalized to the other FBDF methods. Given the coefficients of BDF$p$ and BDF$p$+1, similarly compact formulas for the filter coefficients can be derived following the same steps above.
\end{remark}
\section{Applications to Nonlinear Evolution Equations\label{sec:numerical}}
MOOSE234 is easily implemented for nonlinear evolution equations with decreased cost, increased assured accuracy and thereby increased predictive power. We give one test on a highly stiff and fluctuating ODE (the Van der Pol oscillator), and two tests for the Navier-Stokes equations, which describe a phenomena for which predictive accuracy is important and where memory limitations, accuracy requirements, cognitive and computational complexity are often in competition. 

The Van der Pol test is given in Section \ref{sec:van}. The NSE and the spatial discretization used herein are defined in Section \ref{sec:numerical_formulation}. In Section \ref{sec:numerical_tg_constant} , the constant stepsize, constant order methods are applied to a Taylor-Green vortex array. This is a common benchmark problem in CFD such as \cite{chorin1968}, and others. The VSVO method is tested for the same problem in Section \ref{sec:numerical_tg_adaptive}. 

The methods we develop solve one fully implicit step, and then apply time filters to achieve higher order. We measure error versus work by the number of time steps taken plus the number of rejected solution in Section \ref{sec:van}, and by compute time in Section \ref{sec:numerical_tg_adaptive}. We test methods of different orders (2,23,234,3,34,4) by restricting the approximations that  MOOSE234 is allowed to select. The method of order three is simply adaptive BDF3, 23 is adaptive BDF3 and BDF3-Stab, etc. If the method does not include 4, we do not evaluate the error estimator for FBDF4 so that it does not artificially inflate the runtime of the lower order methods.
\subsection{Van der Pol Oscillator\label{sec:van}}
In this section, we test the methods on the Van der Pol oscillator, a common benchmark problem for stiff ODE integrators. 
\begin{align*}
&y_1'=y_2\\
&y_2'=\bar{\mu}(1-y_1^2)y_2-y_1
\end{align*}
with $\bar{\mu}=1000$.
We compute relative errors at $t=3000$ by comparing with a reference solution from MATLAB's ode15s with an absolute tolerance of 1e-16, and a relative tolerance of 3e-14. 

The error vs total work (number of steps taken plus rejected steps) is shown in Figure \ref{fig:variable_error}, and clearly shows the higher order methods are most efficient for this problem.  We tested many combinations of orders (2,23,234,3,34,4) to verify that the higher order methods reduced the total amount of work, although we do not plot the results of all these combinations. Specific to this test, we note that 23 performed essentially the same as adaptive BDF3, and 34 was essentially the same as MOOSE234. Adaptive FBDF4 appears to perform the best, although it does not have as obvious of a trend. However, other tests such as the one performed in Section \ref{sec:numerical_tg_adaptive} do show a notable increase in efficiency using the full MOOSE234 versus using a subset of the available orders. 

The stepsize and order evolution of MOOSE234 is shown in Figure \ref{fig:vanderpolstepsize}.\\ MOOSE234 chooses BDF3 as the approximation in the relatively flat regions, and switches to the other orders in the region of rapid transition.

\begin{figure}
\begin{center}
\includegraphics[width=0.9\textwidth]{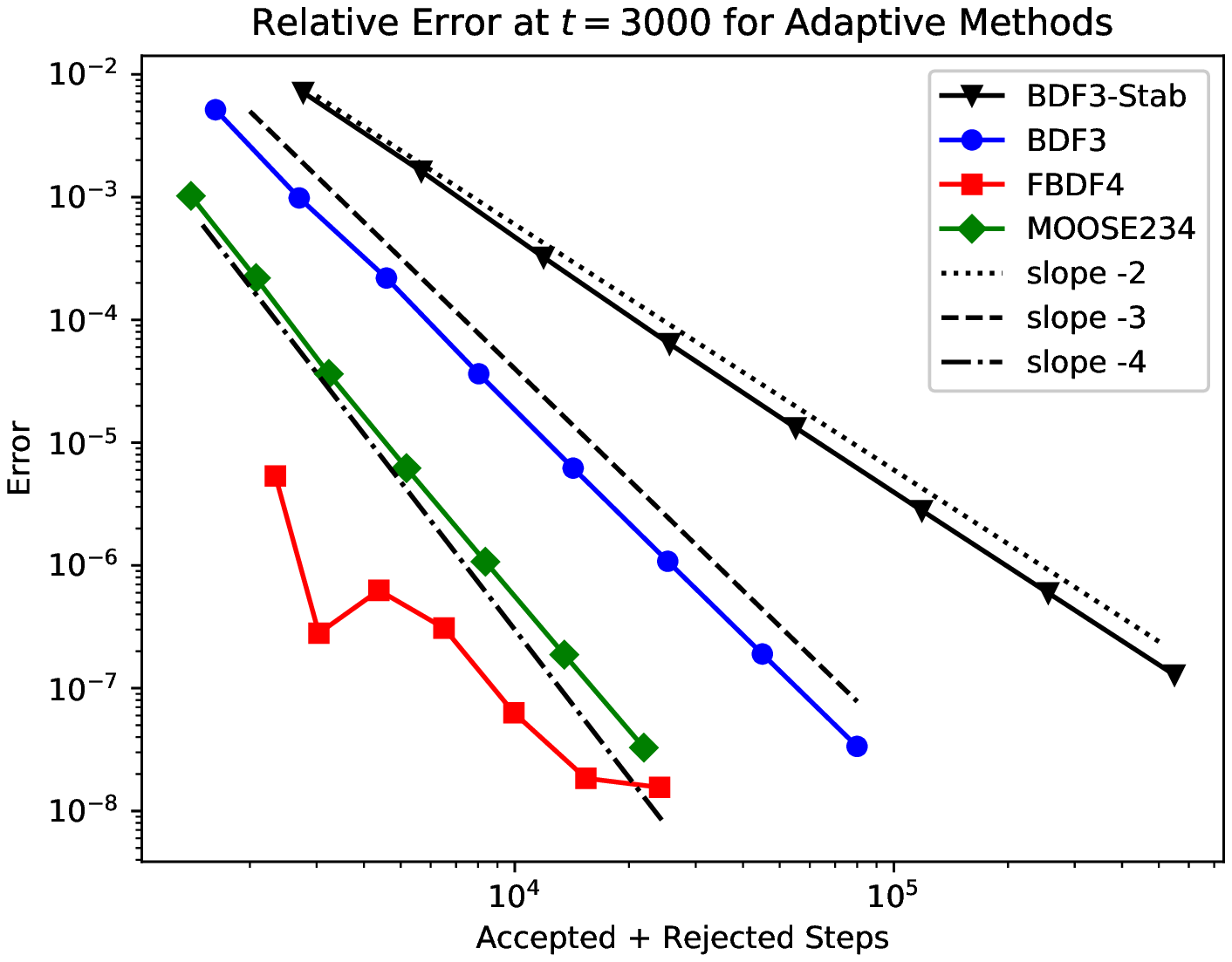}
\caption{ \label{fig:vanderpolconvergence}}
\end{center} 
\end{figure}
\begin{figure}
\begin{center}
\includegraphics[width=0.9\textwidth]{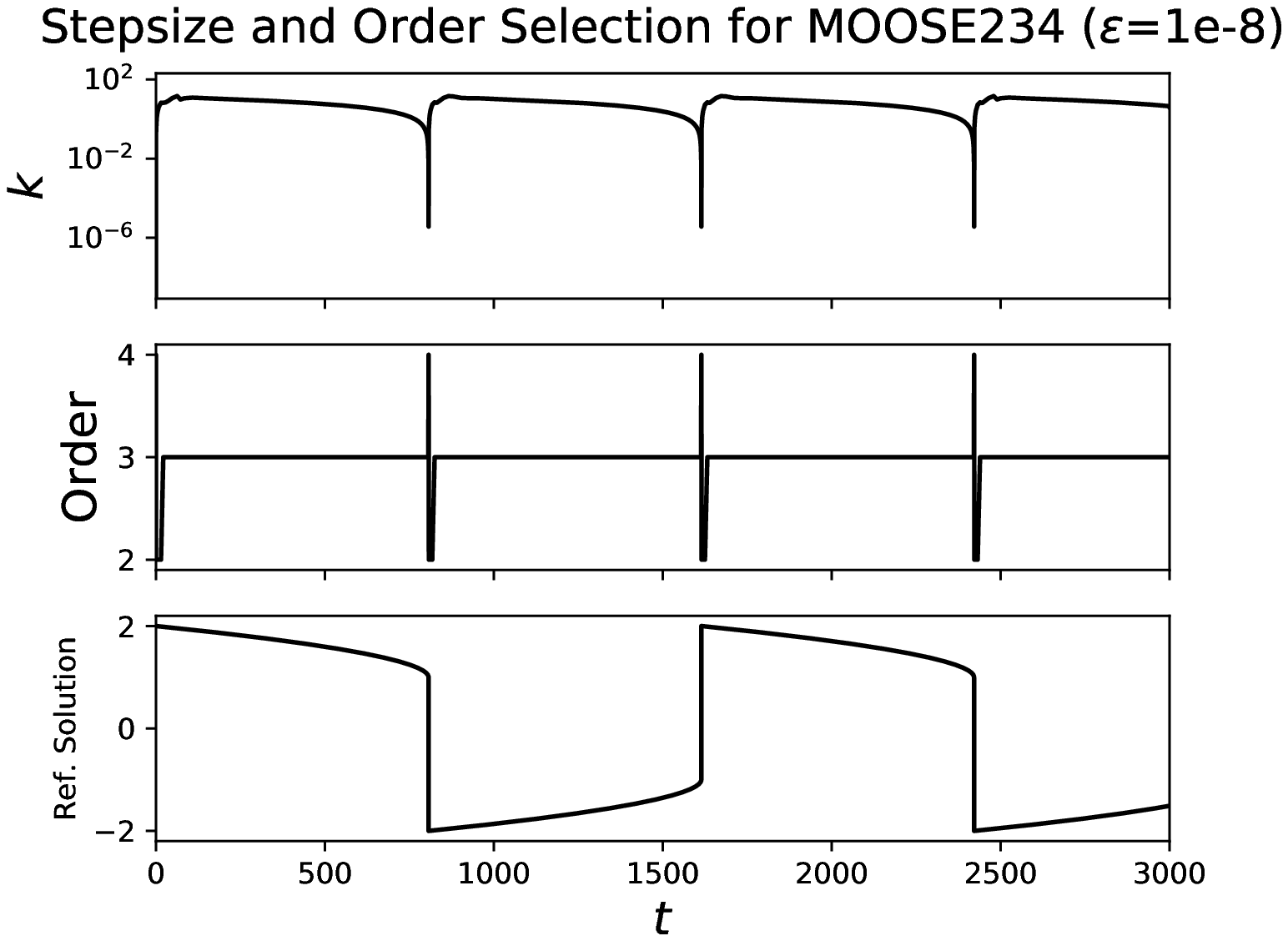}
\caption{ \label{fig:vanderpolstepsize}}
\end{center} 
\end{figure}

\subsection{Finite Element Formulation\label{sec:numerical_formulation}}
Given the domain $%
\Omega \subset \mathbb{R}^{d}$ ($d=2,3$), consider the problem 
\begin{align*}
u_{t}+u\cdot \nabla {u}-\nu \Delta {u}+\nabla {p}=f~& \text{and}~\nabla
\cdot u=0~\text{in}~\Omega \times (0,T] \\
u=0~\text{on}~\partial \Omega ~& \text{and}~u(x,0)=u_{0}(x).
\end{align*}
To discretize in space, let $(X^h,Q^h)$ be a finite element pair satisfying the $LBB_h$ condition. We suppress the spatial discretization on velocity and pressure to avoid excessive super and subscripts. Define the explicitly skew-symmeterized trilinear form
\begin{equation}\notag
b^*(u,v,w) = (u \cdot \nabla v,w) + \frac{1}{2}((\nabla \cdot u)v,w)
\end{equation}
The fully discrete BDF3 problem is as follows. Find $(u^{3}_{n+4},p_{n+4}) \in (X^h,Q^h)$ such that for all $(v^h,q^h) \in (X^h,Q^h)$,
\begin{gather*}
\left(\bar{\alpha}^{(3)}_4u^{3}_{n+4} +\sum_{j=1}^3\bar{\alpha}^{(3)}_ju_{n+j},v^h\right) + b^*(u^{3}_{n+4},u^{3}_{n+4},v^h) + \nu (\nabla u^{3}_{n+4},\nabla v^h) \\-(p_{n+4},\nabla \cdot v^h) = (f(t_{n+4}),v^h) \\
(\nabla \cdot u^{3}_{n+4}, q_h) = 0.
\end{gather*}
This method for constant stepsize was analyzed in \cite{bakerBdfthree82} with the nonlinearity extrapolated with $$b^*(u_{n+4},u_{n+4},v_h)\approx b^*(3u_{n+3}-3u_{n+2}+u_{n+1},3u_{n+3}-3u_{n+2}+u_{n+1},v_h)$$ so that one linear solve was required each step.
We linearize the problem as follows. Let 
\begin{equation}\notag
\hat{u}^{3}_{n+4} = -\frac{1}{c^{(4)}_4}\sum_{i=0}^3c^{(4)}_iu_{n+i}
\end{equation}
which is a fourth order extrapolation of $u^{3}_{n+4}$ to preserve the order of consistency of FBDF4. Then the linearly implicit (sometimes called semi-implicit) method is as follows. Find $(u^{3}_{n+4},p_{n+4}) \in (X^h,Q^h)$ such that for all $(v^h,q^h) \in (X^h,Q^h)$,
\begin{gather*}
\left(\bar{\alpha}^{(3)}_4u^{3}_{n+4} +\sum_{j=1}^3\bar{\alpha}^{(3)}_ju_{n+j},v^h\right) + b^*(\hat{u}^{3}_{n+4},u^{3}_{n+4},v^h) + \nu (\nabla u^{3}_{n+4},\nabla v^h) \\ -(p_{n+4},\nabla \cdot v^h) = (f(t_{n+4}),v^h) \\
(\nabla \cdot u^{3}_{n+4}, q_h) = 0.
\end{gather*}
Linearly implicit methods are a common way to reduce the computational complexity of time stepping nonlinear parabolic problems \cite{douglas}\cite{akrivis2015}. The idea of Baker \cite{bakerunpublished} to treat the convective term in Galerkin approximations of Navier-Stokes this way while preserving skew-symmetry has a long history of use and expansions \cite[p. 185]{girault} \cite{ingram} \cite{ravindran2015} \cite{jiang2017}\cite{akbas}.

Pressure is not a dynamic variable, and only the pressure at the current time level is required so that applying the time filters to pressure will not effect the computed velocity solution. For these reasons, we choose not to filter it for these tests. Therefore, $Est_2$, $Est_3$, and $Est_4$ are only estimates of the temporal velocity error. $|Est_i| = \norm{Est_i}$ is the $L^2(\Omega)$ norm. Whether or not the pressure should be filtered is an open problem.

The differentiation error for the FBDF4 is estimated by $Est_4$, where $Est_4$ is the finite element discretization of \eqref{eqn:error_est_fbdf_difference}, and is the solution of
\begin{gather*}
\bar{\alpha}^{(4)}_4(Est_4,v^h) = (\bar{\alpha}^{(4)}_4u^{4}_{n+4} +\sum_{j=0}^3\bar{\alpha}^{(4)}_ju_{n+j},v^h)
+b^*(u^{4}_{n+4},u^{4}_{n+4},v^h)\\ - (p_{n+4},\nabla \cdot v^h) - (f(t_{n+4}),v^h) 
 \hspace{10mm}\text{  for all }v^h \in X^h.
\end{gather*}
This is the only non-embedded error estimator, and since it is a mass matrix with order one condition number and narrow band width, does not add significantly to the computational complexity. In our tests in Section \ref{sec:numerical_tg_adaptive} with 495,000 degrees of freedom, this system can be solved with about 10 iterations of the conjugate gradient method within a relative tolerance  of 1e-6; this takes about 0.1 seconds on a desktop with a four core Intel i7 7700k cpu. The time taken to solve this system is included in the timing tests in Figure \ref{fig:variable_error}.
All tests were performed with FEniCS \cite{fenics}.

In the adaptive tests, the stepsize ratios were limited to a maximum of two and a minimum of one half, which is a common heuristic in variable stepsize methods \cite{ahmad}. All tests were performed on a square periodic domain using P3/P2 Lagrange elements with 150 elements per edge of the square resulting in 495k degrees of freedom.
\subsection{Constant Stepsize\label{sec:numerical_tg_constant}}
We test the case of constant step size on a Taylor-Green vortex array, a common benchmark problem in CFD.

We took $\Omega$ to be the periodic box with sides of length $2\pi$, and $\nu = 1$. Define $F(t) = e^{- 2\nu t}$. An exact solution is given by
\begin{align*}\notag
u(x,y;t) = F(t)(\cos x \sin y, -\cos y \sin x)\\
p(x,y;t) = -\frac{1}{4} F(t)^2 (\cos 2x + \cos 2y).
\end{align*}
Figures \ref{fig:velConvergence} and \ref{fig:presConvergence} show the relative l2L2 velocity and pressure errors for different stepsize $k$. We achieve convergence rates in time for the velocity predicted by the ODE theory. Pressure errors are reduced in the higher order methods even though they are not filtered, although a much finer mesh may be required to see the same orders of convergence as the velocity. The best treatment of the pressure is still under investigation. 

\begin{figure}
\centering
\begin{subfigure}{0.45\linewidth}
   \centering
   \includegraphics[width = 1\linewidth]{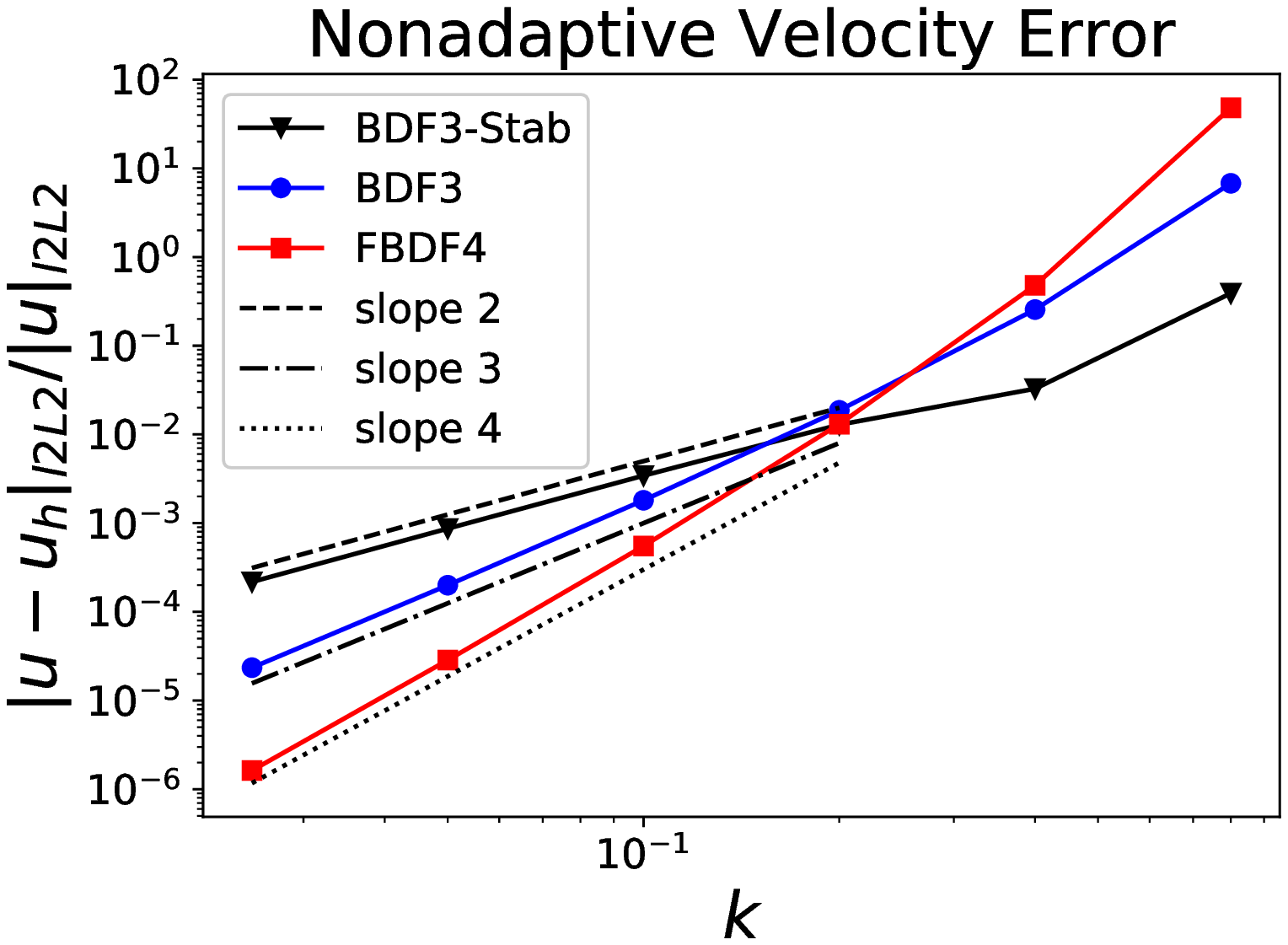}
   \caption{\label{fig:velConvergence}}
\end{subfigure}
\begin{subfigure}{0.45\linewidth}
   \centering
   \includegraphics[width = 1\linewidth]{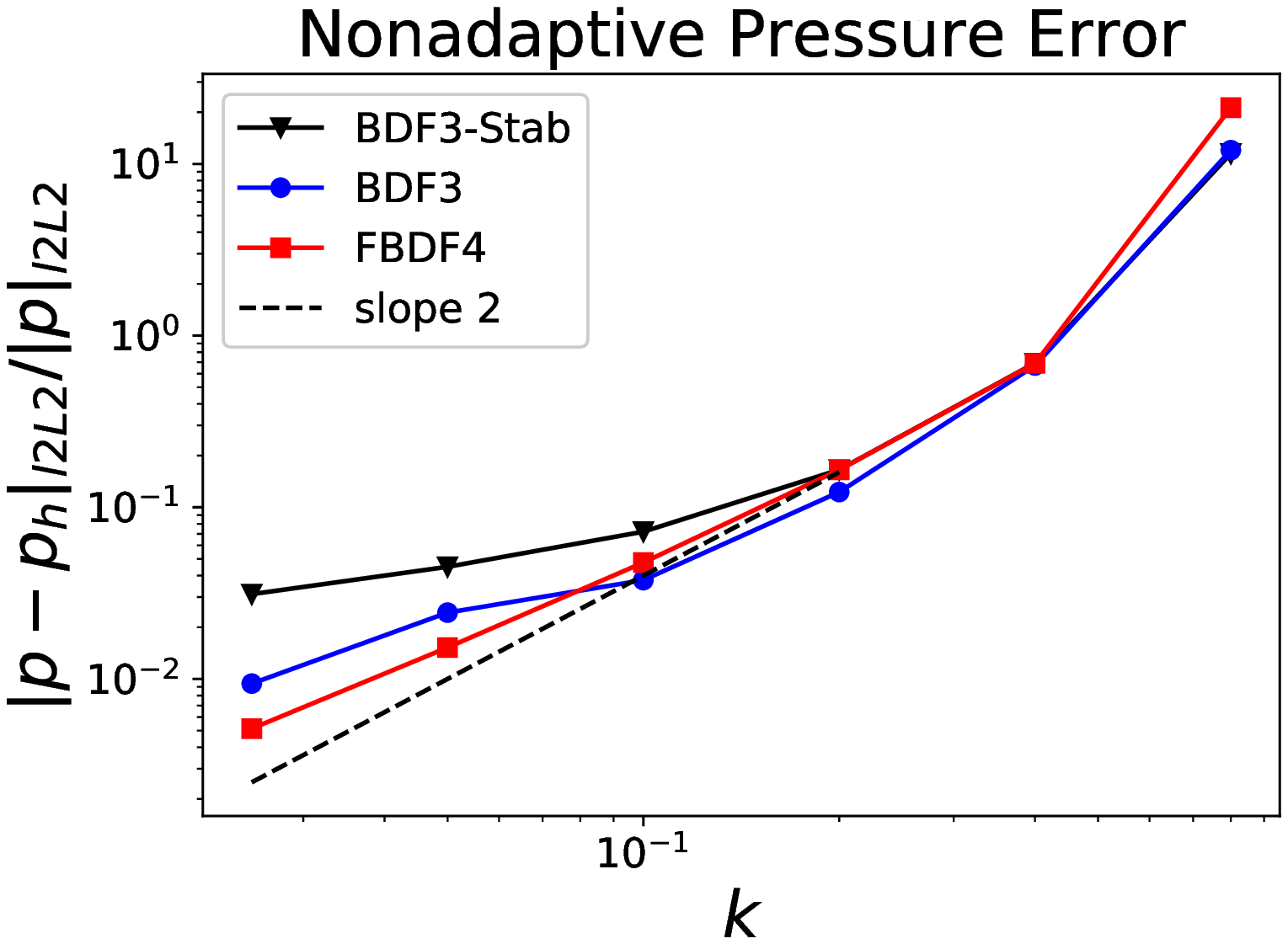}
   \caption{\label{fig:presConvergence}}
\end{subfigure}
\caption{Velocity converges at the predicted rates. FBDF4 improves convergence of the pressure.\selectfont{}}
\end{figure}

\subsection{Variable Stepsize Variable Order\label{sec:numerical_tg_adaptive}}
In this test, we allow the method to adapt, and run the above problem to a final time of $T=10$. The same mesh from the constant stepsize test was used. All tests were initialized with exact solutions that were $k = $1E-3 apart. The safety factor $\gamma=0.9$, and the safety factor used in the event of a rejected step was $\tilde{\gamma}=0.7$.

We tested many combinations of orders (2,23,234,3,34,4) to verify that variable order is necessary for an improvement in execution time. We do not show the results of all these combinations in the plots for clarity, but we do note that the method of order 23 is slightly more efficient than adaptive BDF3. The method of order 34 performed better than FBDF4, but slightly worse than MOOSE234 for larger tolerances. Each test was timed starting at the outer time stepping loop of the program, and ending after the final time step. Various $\varepsilon$ were tested from 1e-1 to 1e-8. Figure \ref{fig:variable_error} shows the amount of time in seconds each method required to run to completion for different tolerances versus the relative l2L2 errors, with the tolerances decreasing from left to right.

For the smallest tolerances, we clearly see that the higher order methods are the most efficient with the full MOOSE234 method performing the best. MOOSE234 is about three times faster than adaptive BDF3 for the final tolerance.

\begin{figure}
\begin{center}
\includegraphics[width=0.9\textwidth]{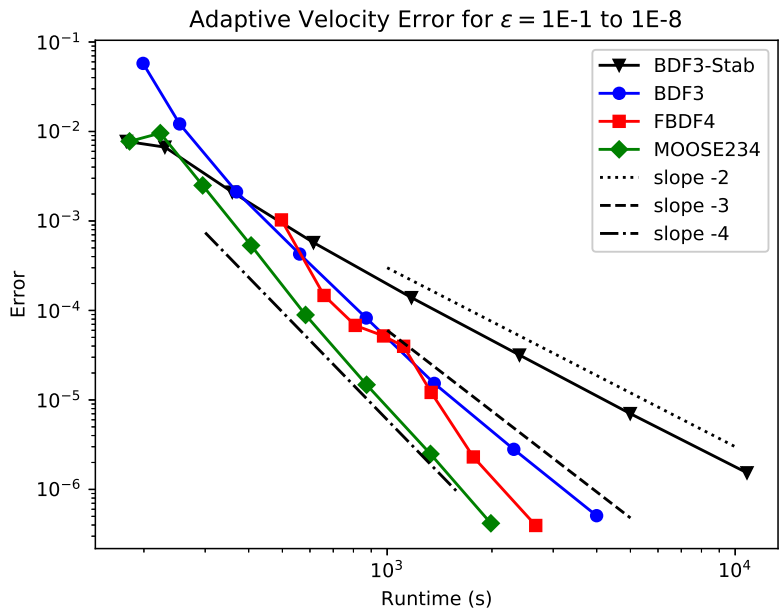}
\includegraphics[width=0.9\textwidth]{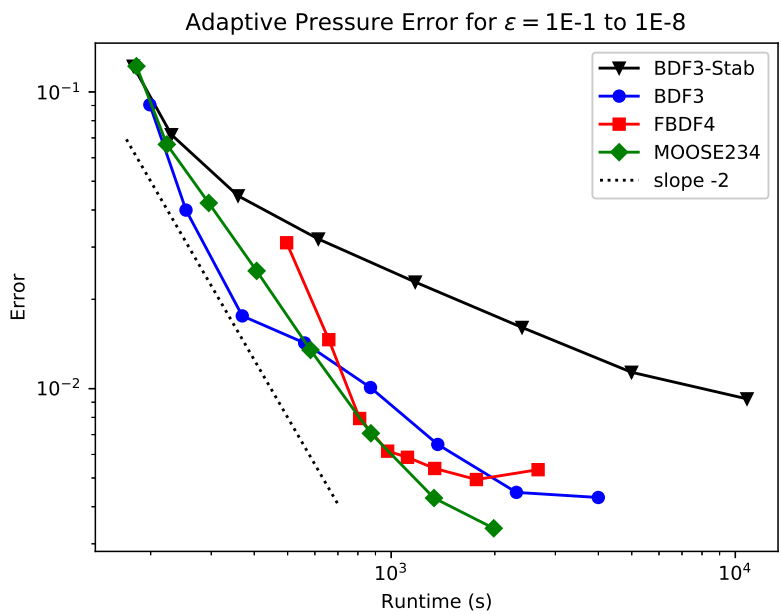}
\caption{The greatest accuracy for the least compute time is from the higher order methods.\label{fig:variable_error}}
\end{center} 
\end{figure}

\section{Conclusion\label{sec:conclusion}}

We present MOOSE234, a new stiff VSVO solver orders two, three, and four. The computational complexity is comparable to BDF3. In our tests on the Van der Pol oscillator and a standard spatial discretization of the Navier-Stokes equations, the VSVO methods of higher order give the most accurate approximations at least cost. We also developed FBDF$p$+1 of orders two through six, which uses computationally inexpensive time filters to increase the order all variable stepsize BDF$p$ methods with $p\leq 5$ by one. 

Many open problems remain. There may be a more general order barrier for filtering OLMs and LMMs. Linearly implicit (tested herein for Navier-Stokes) and Implicit-Explicit versions need systematic development. Error analysis of the fully discrete method for NSE, and a deeper understanding of the pressure error are needed. There may exist more optimal $G$-Stabilizing filters for BDF3. The idea of constructing $G$-stabilizing time filters can be applied to higher order BDF methods, and other multistep methods. MOOSE234 can be applied to other complex nonlinear applications. FBDF$p$+1 besides FBDF4 can be embedded in other methods.

\textbf{Acknowledgements}
The authors are grateful to Michael Neilan and Catalin Trenchea for useful discussions.

This research was partially supported by NSF grants DMS1522267, 1817542, CBET 1609120,  NSFC grant 11571274 and China Scholarship Council grant \\201706280334.
\section{Appendix}
\subsection{Code to calculate BDF coefficients\label{sec:psuedocode}}
\begin{algorithm}[BDF and Filter Coefficients]\label{alg:with_div_differences}
$\boldsymbol{d}^j= \boldsymbol{e}_{m+1-j} \in \mathbb{R}^{m+1}$ for $j \in \{0,1,\cdots,m\}$.

\textbf{\emph{FUNCTION}} BACKDIFF($t_{n},...,t_{n+m}$)

\qquad $(c^{(0)}_0,c^{(0)}_1,\cdots,c^{(0)}_m) = \boldsymbol{d}^0$

\qquad \textbf{\emph{FOR}} \emph{q = 1:m} 

\qquad \qquad \textbf{\emph{FOR}} \emph{j = 0:m-q}

\qquad \qquad \qquad $\boldsymbol{d}^j = (\boldsymbol{d}^j - \boldsymbol{d}^{j+1})/(t_{n+m-j} -  t_{n+m-q-j})$

\qquad \qquad \textbf{\emph{END FOR}}

\qquad \qquad $(c^{(q)}_0,c^{(q)}_1,\cdots,c^{(q)}_m) = \boldsymbol{d}^0$

\qquad \textbf{\emph{END FOR}}

\textbf{\emph{RETURN}} $\{c^{(j)}_i\}_{i,j=0}^m$

\textbf{//The function below calculates the coefficients for BDF$p$,}

\textbf{//$\eta^{(p+1)}$, and the coefficients of the divided differences $c^{(j)}_i$ }

\textbf{\emph{FUNCTION}} BDFANDFILTCOEFF($t_{n},...,t_{n+m}$,p)

\qquad $\{c^{(j)}_i\}_{i,j=0}^m$ = BACKDIFF($t_{n},...,t_{n+m}$)

\qquad $\eta^{(p+1)} = \left(\prod_{i=1}^p(t_{n+m}-t_{n+m-i})\right)/\left(\sum_{j=1}^{p+1}(t_{n+m}-t_{n+m-j})^{-1}\right)$ 

\qquad \textbf{\emph{FOR}} \emph{k = m-p:m} 

\qquad \qquad $\bar{\alpha}_k = \sum_{j=1}^p \left[\prod_{i=1}^{j-1}(t_{n+m} - t_{n+m-i}) \right]c^{(j)}_k$

\qquad \textbf{\emph{END FOR}}

\textbf{\emph{RETURN}} $\{\bar{\alpha}_k\}_{k=m-p}^m, \eta^{(p+1)}, \emph{ and } \{c^{(j)}_i\}_{i,j=0}^m$

\end{algorithm}
\subsection{Python Implementation}

We also include an implementation \\of the above functions in Python.
The ``\textbackslash '' character is the continue line command.

\begin{lstlisting}
import numpy as np

def first_difference(T,Y):
	"""
	T is a vector of times, greatest to least
	Y is a vector older differences, [d(j),d(j+1)]
	See algorthm in paper
	"""
	return (Y[0]-Y[1])/(T[0] - T[1])

def backward_differences(T):
	"""
	Generate the divided difference coefficients.
	T is a vector of times from least to greatest, e.g.
	T = [t_n,t_n+1,....,t_n+m]
	"""
	numOfTimes = len(T)
	#the number of steps in the method
	m = numOfTimes - 1
	#generate the initial differences, which
	#is just the standard basis.
	D = np.array([ [np.float64((i+1)==(numOfTimes-j))\
	  for i in xrange(numOfTimes)] \
	    for j in xrange(numOfTimes)])
	differences = np.zeros_like(D)
	differences[0] = D[0]
	
	for q in xrange(1,numOfTimes):
	 for j in xrange(numOfTimes - q):
		D[j] = first_difference\
			([T[m-j],T[m-j-q]],[D[j],D[j+1]])
		differences[q] = D[0]
	return differences

def bdf_coefficients_and_differences(T,order):
	differences = backward_differences(T)
	m = len(T)-1
	#calculate filter coefficient for increasing order
	eta = np.prod([T[m]-T[m-i] \
	 for i in xrange(1,m)])/np.sum(1./(T[m] - T[m-j]) \ 
	  for j in xrange(1,m+1))

	return [np.sum(np.prod([T[m]-T[m-i] \
	 for i in xrange(1,j)])*differences[j] \
	  for j in xrange(1,order+1)), differences,eta]	
	
\end{lstlisting}

\normalsize

\subsection{Coefficients of $G$ matrix}
\begin{eqnarray}
\begin{aligned}\label{eqn:bdf1}
&g33 =(\mu + 3^{\frac{1}{2}}((7\mu - 1)(6\mu - 5)(14\mu - 1)(\mu + 1)^5)^{\frac{1}{2}} \\
&+ 41\mu^2 + 83\mu^3 + 42\mu^4 + 1)/(44(\mu^4 + 4\mu^3 + 6\mu^2 + 4\mu + 1)) \\
&+ (- 42\mu^2 + \mu + 21)/(22(\mu^2 + 2\mu + 1));
\end{aligned}
\end{eqnarray}

\begin{eqnarray}
\begin{aligned}\label{eqn:bdf2}
&g32 =(42\mu^2 + 13\mu - 7)/(11(\mu^2 + 2\mu + 1)) - (\mu + 3^{\frac{1}{2}}((7\mu - 1)\\
&(6\mu - 5)(14\mu - 1)(\mu + 1)^5)^{\frac{1}{2}} + 41\mu^2 + 83\mu^3 + 42\mu^4 + 1)\\
&/(22(\mu^4 + 4\mu^3 + 6\mu^2 + 4\mu + 1));
\end{aligned}
\end{eqnarray}

\begin{eqnarray}
\begin{aligned}\label{eqn:bdf3}
&g31 =(\mu + 3^{\frac{1}{2}}((7\mu - 1)(6\mu - 5)(14\mu - 1)(\mu + 1)^5)^{\frac{1}{2}} + 41\mu^2 \\
&+ 83\mu^3 + 42\mu^4 + 1)/(44(\mu^4 + 4\mu^3 + 6\mu^2 + 4\mu + 1)) - (21\mu^2 \\
 &+8\mu - 2)/(11(\mu^2 + 2\mu + 1));
\end{aligned}
\end{eqnarray}

\begin{eqnarray}
\begin{aligned}\label{eqn:bdf4}
&g22 =(\mu + 3^{\frac{1}{2}}((7\mu - 1)(6\mu - 5)(14\mu - 1)(\mu + 1)^5)^{\frac{1}{2}} + 41\mu^2\\
& + 83\mu^3 + 42\mu^4 + 1)/(11(\mu^4 + 4\mu^3 + 6\mu^2 + 4\mu + 1)) - (120\\
& \mu^2+ 23\mu - 9)/(22(\mu^2 + 2\mu + 1));
\end{aligned}
\end{eqnarray}

\begin{eqnarray}
\begin{aligned}\label{eqn:bdf5}
&g21 =(51\mu^2 + 5\mu - 2)/(22(\mu^2 + 2\mu + 1)) - (\mu + 3^{\frac{1}{2}}((7\mu - 1)\\
&(6\mu - 5)(14\mu - 1)(\mu + 1)^5)^{\frac{1}{2}} + 41\mu^2 + 83\mu^3 + 42\mu^4 + 1)\\
&/(22(\mu^4 + 4\mu^3 + 6\mu^2 + 4\mu + 1));
\end{aligned}
\end{eqnarray}

\begin{eqnarray}
\begin{aligned}\label{eqn:bdf6}
&g11 =(- 9\mu^2 + 2\mu)/(11(\mu^2 + 2\mu + 1)) + (\mu + 3^{\frac{1}{2}}((7\mu - 1)\\
&(6\mu - 5)(14\mu - 1)(\mu + 1)^5)^{\frac{1}{2}} + 41\mu^2 + 83\mu^3 + 42\mu^4 + 1)\\
&/(44(\mu^4 + 4\mu^3 + 6\mu^2 + 4\mu + 1));
\end{aligned}
\end{eqnarray}

\begin{eqnarray}
\begin{aligned}\label{eqn:bdf7}
& a3 =(22*\mu^2((\mu + 3^{\frac{1}{2}}((7\mu - 1)(6\mu - 5)(14\mu - 1)(\mu + 1)^5)^{\frac{1}{2}}\\
& + 41\mu^2 + 83\mu^3 + 42\mu^4 + 1)/(44(\mu^4 + 4\mu^3 + 6\mu^2 + 4\mu + 1)))^{\frac{3}{2}}\\
& - 42\mu^2((\mu + 3^{\frac{1}{2}}((7\mu - 1)(6\mu - 5)(14\mu - 1)(\mu + 1)^5)^{\frac{1}{2}}\\
& + 41\mu^2 + 83\mu^3 + 42\mu^4 + 1)/(44(\mu^4 + 4\mu^3 + 6\mu^2 + 4\mu + 1)))^{\frac{1}{2}} \\
&+ \mu((\mu + 3^{\frac{1}{2}}((7\mu - 1)(6\mu - 5)(14\mu - 1)(\mu + 1)^5)^{\frac{1}{2}} \\
&+ 41\mu^2 + 83\mu^3 + 42\mu^4 + 1)/(44(\mu^4 + 4\mu^3 + 6\mu^2 + 4\mu + 1)))^{\frac{1}{2}}\\
& + 44\mu((\mu + 3^{\frac{1}{2}}((7\mu - 1)(6\mu - 5)(14\mu - 1)(\mu + 1)^5)^{\frac{1}{2}} \\
&+ 41\mu^2 + 83\mu^3 + 42\mu^4 + 1)/(44(\mu^4 + 4\mu^3 + 6\mu^2 + 4\mu + 1)))^{\frac{3}{2}}\\
& - ((\mu + 3^{\frac{1}{2}}((7\mu - 1)(6\mu - 5)(14\mu - 1)(\mu + 1)^5)^{\frac{1}{2}}\\
& + 41\mu^2 + 83\mu^3 + 42\mu^4 + 1)/(44(\mu^4 + 4\mu^3 + 6\mu^2 + 4\mu + 1)))^{\frac{1}{2}} \\
&+ 22((\mu + 3^{\frac{1}{2}}((7\mu - 1)(6\mu - 5)(14\mu - 1)(\mu + 1)^5)^{\frac{1}{2}} \\
&+ 41\mu^2 + 83\mu^3 + 42\mu^4 + 1)/(44(\mu^4 + 4\mu^3 + 6\mu^2 + 4\mu \\
&+ 1)))^{\frac{3}{2}}/(20\mu - 2)
;
\end{aligned}
\end{eqnarray}

\begin{eqnarray}
\begin{aligned}\label{eqn:bdf8}
&a2 =-(22\mu^2((\mu + 3^{\frac{1}{2}}((7\mu - 1)(6\mu - 5)(14\mu - 1)(\mu + 1)^5)^{\frac{1}{2}}\\
& + 41\mu^2 + 83\mu^3 + 42\mu^4 + 1)/(44(\mu^4 + 4\mu^3 + 6\mu^2 + 4\mu + 1)))^{\frac{3}{2}}\\
& - 42\mu^2((\mu + 3^{\frac{1}{2}}((7\mu - 1)(6\mu - 5)(14\mu - 1)(\mu + 1)^5)^{\frac{1}{2}}\\ 
&+ 41\mu^2 + 83\mu^3 + 42\mu^4 + 1)/(44(\mu^4 + 4\mu^3 + 6\mu^2 + 4\mu + 1)))^{\frac{1}{2}}\\
& + 11\mu((\mu + 3^{\frac{1}{2}}((7\mu - 1)(6\mu - 5)(14\mu - 1)(\mu + 1)^5)^{\frac{1}{2}}\\
& + 41\mu^2 + 83\mu^3 + 42\mu^4 + 1)/(44(\mu^4 + 4\mu^3 + 6\mu^2 + 4\mu + 1)))^{\frac{1}{2}}\\
& + 44\mu((\mu + 3^{\frac{1}{2}}((7\mu - 1)(6\mu - 5)(14\mu - 1)(\mu + 1)^5)^{\frac{1}{2}}\\
& + 41\mu^2 + 83\mu^3 + 42\mu^4 + 1)/(44(\mu^4 + 4\mu^3 + 6\mu^2 + 4\mu + 1)))^{\frac{3}{2}}\\
& - 2((\mu + 3^{\frac{1}{2}}((7\mu - 1)(6\mu - 5)(14\mu - 1)(\mu + 1)^5)^{\frac{1}{2}} \\
&+ 41\mu^2 + 83\mu^3 + 42\mu^4 + 1)/(44(\mu^4 + 4\mu^3 + 6\mu^2 + 4\mu + 1)))^{\frac{1}{2}}\\
& + 22((\mu + 3^{\frac{1}{2}}((7\mu - 1)(6\mu - 5)(14\mu - 1)(\mu + 1)^5)^{\frac{1}{2}} \\
&+ 41\mu^2 + 83\mu^3 + 42\mu^4 + 1)/(44(\mu^4 + 4\mu^3 + 6\mu^2 + 4\mu \\
&+ 1)))^{\frac{3}{2}}/(10\mu - 1)
;
\end{aligned}
\end{eqnarray}

\begin{eqnarray}
\begin{aligned}\label{eqn:bdf9}
&a1 =(22\mu^2((\mu + 3^{\frac{1}{2}}((7\mu - 1)(6\mu - 5)(14\mu - 1)(\mu + 1)^5)^{\frac{1}{2}}\\
& + 41\mu^2 + 83\mu^3 + 42\mu^4 + 1)/(44(\mu^4 + 4\mu^3 + 6\mu^2 + 4\mu + 1)))^{\frac{3}{2}}\\
&- 42\mu^2((\mu + 3^{\frac{1}{2}}((7\mu - 1)(6\mu - 5)(14\mu - 1)(\mu + 1)^5)^{\frac{1}{2}} \\
&+ 41\mu^2 + 83\mu^3 + 42\mu^4 + 1)/(44(\mu^4 + 4\mu^3 + 6\mu^2 + 4\mu + 1)))^{\frac{1}{2}} \\
&+ 41\mu((\mu + 3^{\frac{1}{2}}((7\mu - 1)(6\mu - 5)(14\mu - 1)(\mu + 1)^5)^{\frac{1}{2}} \\
&+ 41\mu^2 + 83\mu^3 + 42\mu^4 + 1)/(44(\mu^4 + 4\mu^3 + 6\mu^2 + 4\mu + 1)))^{\frac{1}{2}}\\
&+ 44\mu((\mu + 3^{\frac{1}{2}}((7\mu - 1)(6\mu - 5)(14\mu - 1)(\mu + 1)^5)^{\frac{1}{2}}\\ 
 &+ 41\mu^2 + 83\mu^3 + 42\mu^4 + 1)/(44(\mu^4 + 4\mu^3 + 6\mu^2 + 4\mu + 1)))^{\frac{3}{2}}\\ 
 &- 5((\mu + 3^{\frac{1}{2}}((7\mu - 1)(6\mu - 5)(14\mu - 1)(\mu + 1)^5)^{\frac{1}{2}} \\
 &+ 41\mu^2 + 83\mu^3 + 42\mu^4 + 1)/(44(\mu^4 + 4\mu^3 + 6\mu^2 + 4\mu + 1)))^{\frac{1}{2}}\\
  &+ 22((\mu + 3^{\frac{1}{2}}((7\mu - 1)(6\mu - 5)(14\mu - 1)(\mu + 1)^5)^{\frac{1}{2}}\\ 
  &+ 41\mu^2 + 83\mu^3 + 42\mu^4 + 1)/(44(\mu^4 + 4\mu^3 + 6\mu^2 + 4\mu \\
 & + 1)))^{\frac{3}{2}}/(20\mu - 2)
;
\end{aligned}
\end{eqnarray}

\begin{eqnarray}
\begin{aligned}\label{eqn:bdf10}
&a0 =-((\mu + 3^{\frac{1}{2}}((7\mu - 1)(6\mu - 5)(14\mu - 1)(\mu + 1)^5)^{\frac{1}{2}}+41  \\
&\mu^2+ 83\mu^3 + 42\mu^4 + 1)/(44(\mu^4 + 4\mu^3 + 6\mu^2 + 4\mu + 1)))^{\frac{1}{2}};
\end{aligned}
\end{eqnarray}

\bibliography{vsvo}
\bibliographystyle{siam}

\end{document}